\newtheorem{thm}{Theorem}[section]
\newtheorem{lem}{Lemma}[section]
\theoremstyle{definition}
\newtheorem{defn}{Definition}[section]
\theoremstyle{remark}
\newtheorem{rmk}{Remark}[section]
\numberwithin{equation}{section}
\theoremstyle{example}
\newtheorem{exa}{Example}[section]
\numberwithin{equation}{section} \numberwithin{algorithm}{section}
\begin{document}

\bigskip
\bigskip

\bigskip

\begin{center}

\textbf{\large A splitting primal-dual proximity algorithm for
solving composite optimization problems}

\end{center}

\begin{center}
Yu-Chao Tang$^{1}$\footnote{Corresponding author, Email:
hhaaoo1331@aliyun.com; yctang.09@stu.xjtu.edu.cn}, Chuan-Xi
Zhu$^{1}$, Meng Wen$^{2,3}$, Ji-Gen Peng$^{2,3}$,
\end{center}

\begin{center}
1. Department of Mathematics, Nanchang University, Nanchang 330031,
P.R. China\\
2. School of Mathematics and Statistics, Xi'an Jiaotong University,
Xi'an 710049, P.R. China \\
3. Beijing Center for Mathematics and Information Interdisciplinary
Sciences, Beijing, P.R. China\\

\end{center}

\bigskip

\begin{center}
{\bf Abstract}
\end{center}

Our work considers the optimization of the sum of a non-smooth
convex function and a finite family of composite convex functions,
each one of which is composed of a convex function and a bounded linear
operator. This type of problem is associated with many interesting challenges
encountered in the image restoration and image reconstruction fields. We
developed a splitting primal-dual proximity algorithm to solve this
problem. Further, we propose a preconditioned method, of which the
iterative parameters are obtained without the need to know some
particular operator norm in advance. Theoretical convergence
theorems are presented.  We then apply the proposed methods to solve
a total variation regularization model, in which the $L2$ data error
function is added to the $L1$ data error function. The main advantageous
feature of this model is its capability to combine different loss functions.
The numerical results obtained for computed tomography (CT) image reconstruction
demonstrated the ability of the proposed algorithm to reconstruct an image with few
and sparse projection views while maintaining the image quality.

\bigskip
\noindent \textbf{Keywords:} Sparse optimization; Proximity
operator; Saddle-point problem; CT image reconstruction.

 \noindent \textbf{2010 Mathematics Subject Classification:} 90C25;
 65K10.

\section{Introduction}

In this paper, we consider solving the following convex optimization
problem:
\begin{equation}\label{problem1}
\min_{x\in X}\  \sum_{i=1}^{l}F_i (K_i x) + G(x),
\end{equation}
where $l$ is an integer, $X$ and $\{Y_i\}_{i=1}^{l}$ are Hilbert
spaces, the functions $\{F_i\}_{i=1}^{l}$ and $G$ belong in
$\Gamma_{0}(Y_i)$ and $\Gamma_{0}(X)$, respectively, and $K_i :
X\rightarrow Y_i$ is a continuous linear operator for $i=1, 2,
\cdots, l$. Here and in what follows, for a real Hilbert space $H$,
$\Gamma_{0}(H)$ denotes the collection of all proper lower
semi-continuous (LSC) convex functions from $H$ to $(-\infty, +
\infty]$. Based on the assumptions of problem (\ref{problem1}),
the functions $(F_i \cdot K_i)_{1\leq i \leq l}$ may be used to model the data
fidelity term, including smooth and non-smooth measures, and $G$ could be
the indicator function of a convex set or $\ell_{1}$-norm, for example.
Therefore, the optimization model (\ref{problem1}) would be able to accommodate
a combination of different data error functions.

In particular, if $l=1$, then problem (\ref{problem1}) is reduced
to the following
\begin{equation}\label{problem2}
\min_{x\in X}\ F(Kx) + G(x),
\end{equation}
where $F\in \Gamma_{0}(Y)$, $G\in \Gamma_{0}(X)$, and $K:
X\rightarrow Y$ is a continuous linear operator. Under the
assumption that the proximity operator of $F^{*}$ and $G$ are easy
to compute (i.e., it either has a closed-form solution or can be efficiently
computed with high precision), Chambolle and
Pock\cite{chambolleandpock2011} proposed a primal-dual proximity
algorithm to solve problem (\ref{problem2}). They proved the
convergence of the proposed iterative algorithm in finite
dimensional Hilbert spaces. They also pointed out the relationship
between the primal-dual proximity algorithm and other existing
algorithms, such as extrapolational gradient
methods\cite{popov1980}, the Douglas-Rachford splitting
algorithm\cite{combettes2007}, and the alternating direction method of
multipliers\cite{boyd1}. Further, in \cite{pock1}, they introduced a
precondition technique to compute the step size of the algorithm
automatically. Numerical experiments showed that the preconditioned
primal-dual proximity algorithm outperforms the primal-dual proximity
algorithm in \cite{chambolleandpock2011}. He and Yuan\cite{he2012}
studied the convergence of the primal-dual proximity algorithm by
presenting this algorithm of Chambolle and
Pock\cite{chambolleandpock2011} in the form of a proximal point
algorithm in infinite dimensional Hilbert spaces.
Condat\cite{condat2013} also obtained the convergence of the
primal-dual proximity algorithm but from a different point of view,
namely by studying the following optimization problem:
\begin{equation}\label{problem3}
\min_{x}\ P(x) + G(x) + F(Kx),
\end{equation}
where $P: X\rightarrow R$ is convex, differentiable, and its
gradient is Lipschitz continuous, and $G$, $F$, and $K$ are the same
as in problem (\ref{problem2}). If $P(x) = 0$, then problem
(\ref{problem3}) reduces to problem (\ref{problem2}).
Condat\cite{condat2013} proposed an efficient iterative algorithm
for solving (\ref{problem3}) and also proved its convergence based
on Krasnoselskii-Mann iteration methods. The primal-dual proximity
algorithm is a special case of Condat's algorithm by setting
$P(x)=0$. Further, Condat proved the convergence of the primal-dual
proximity algorithm in finite dimensional spaces where the
parameters were relaxed from $\sigma \tau \|K\|^2 < 1$ to $\sigma
\tau \|K\|^{2} \leq 1$. These are very useful results because it
becomes possible to fix one parameter in the algorithm, allowing the
other parameter to be tuned in practice.

If we  let $F_0(x) = G(x), K_0 = I$, then the problem
(\ref{problem1}) can also be formulated as follows,
\begin{equation}\label{problem4}
\min_{x}\ \sum_{i=0}^{l}F_{i}(K_i x).
\end{equation}
Setzer et al.\cite{setzer2010} proposed to use an alternating split
Bregman method\cite{goldstein2009} to solve the problem
(\ref{problem4}) and proved\cite{setzer2009, setzer2011}
that this method coincided with the
alternating direction method of multipliers, which can be
interpreted as a Douglas-Rachford splitting algorithm applied to the
dual problem. However, this iterative algorithm always incorporates
linear equations, which are required to be solved either explicitly or
approximately. Condat\cite{condat2013} considered the following
general composite optimization problem,
\begin{equation}\label{problem5}
\min_{x}\ \sum_{i=1}^{l}F_i (K_i x) + G(x) + Q(x),
\end{equation}
where the linear operators $\{K_i\}_{i=1}^{l}$, the functions
$\{F_i\}_{i=1}^{l}$, and $G$ are the same as in problem
(\ref{problem1}), apart from the fact that the function $Q(x)$
is also convex, differentiable, and displays a Lipschitz continuous
gradient. He obtained an iterative algorithm to solve problem (\ref{problem5})
by recasting it as problem (\ref{problem1}) using the product spaces
method. The iterative parameters in the algorithm introduced in
\cite{condat2013} rely on the estimation of the operator norm
$\|\sum_{i=1}^{l}K_{i}^{*}K_i\|$, which may affect its practical use.
To overcome this disadvantage, we propose a preconditioned
iterative algorithm to solve problem (\ref{problem1}), where the
iterative parameters are calculated self-adaptively. If the function
$Q(x)$ is equal to the least-squares loss function, i.e., $Q(x) =
\frac{1}{2}\|Ax-b\|_{2}^{2}$, then problem (\ref{problem5})
could be viewed as a special case of problem (\ref{problem1}).

The primal-dual algorithm is a very flexible method to
solve the optimization problem (\ref{problem2}), which has wide
potential application in image restoration and image reconstruction, for
example,\cite{zhang2011,combettes2012,chenpj-nanjing2013, chenpj2013,
tangyuchao20121,tangyuchao2013, tangyuchao2015}. Sidky et
al.\cite{sidky2} applied the primal-dual proximity algorithm
introduced by Chambolle and Pock \cite{chambolleandpock2011, pock1}
to solve various convex optimization problems. For example,
\begin{align}
& \min_{x}\ \frac{1}{2}\|Ax-b\|_{2}^{2};\label{l2} \\
 & \min_{x}\
\frac{1}{2}\|Ax-b\|_{2}^{2}, \quad s.t., x\geq 0;\label{constrainedl2}\\
& \min_{x}\
\frac{1}{2}\|Ax-b\|_{2}^{2} + \lambda \|x\|_{TV}; \label{L2-TV}\\
& \min_{x}\ \|Ax-b\|_{1} + \lambda \|x\|_{TV}; \label{L1+TV} \\
& \min_{x}\ KL(Ax,b) + \lambda \|x\|_{TV} \label{KL+TV},
\end{align}
where $\|\cdot\|_{1}$ represents the $\ell_{1}$-norm,
$\|\cdot\|_{2}$ represents the $\ell_{2}$-norm, $\|\cdot\|_{TV}$
denotes the total variation semi-norm, $KL(\cdot, \cdot)$ denotes
the Kullback-Leibler (KL) divergence, and $\lambda
>0$ is the regularization parameter balancing the data error term
and the regularization term. The least-squares data error term is
used widely in computed tomography (CT) image reconstruction.
It is modeled by adding Gaussian noise to the collected data and
the $L1$ data loss function has the advantage
of reducing the impact of image sampling with large outliers. They
studied the application of this convex optimization problem in
CT image reconstruction to demonstrate the
performance of these different models under appropriate levels of noise.
The numerical results showed that the primal-dual proximity algorithm
can efficiently solve these problems and it exhibited very good
performance in terms of reconstructing simulated breast CT data.
The work of Sidky et al.\cite{sidky2} motivated us to introduce a
general composite optimization problem for image reconstruction.
Then, the above optimization problem (\ref{L2-TV}) and (\ref{L1+TV})
would be a special case of our proposed optimization problem.

The purpose of this paper is to introduce a splitting primal-dual
proximity algorithm for solving problem (\ref{problem1}) and to
propose a preconditioning technique to improve the performance of this
algorithm. In addition, theoretical convergence theorems
are also provided. We then demonstrate the performance of our proposed algorithms
by applying them to solve a composite optimization problem, which has wide application
in the image restoration and image reconstruction fields.

The rest of this paper is organized as follows. In Section
\ref{section2}, we provide selected background information on
convex analysis. In Section \ref{section3}, we briefly review the
primal-dual proximal algorithm, together with one of its preconditioned
techniques. These iterative algorithms are employed to develop a
splitting primal-dual proximal algorithm for solving problem (\ref{problem1})
and the results are presented in Section \ref{section4}. In
Section \ref{section5}, we apply the proposed iterative algorithm to solve
a particular convex optimization model, which is relevant to the CT image
reconstruction problem. We use numerical results to illustrate the capabilities
of our proposed algorithm in Section \ref{section6}.
Finally, we offer some conclusions.


\section{Preliminaries}\label{section2}

In this section, we introduce some definitions and notations. Let
$H$ be a real Hilbert space, with its inner product $\langle \cdot,
\cdot \rangle$ and norm $\| \cdot \| = \langle \cdot, \cdot
\rangle^{1/2}$. We denote by $\Gamma_{0}(H)$ the set of proper
lower semicontinuous (LSC), convex functions from $H$ to $(-\infty,
+\infty]$.

\begin{defn}
Let $f$ be a real-valued convex function on $H$, for which the proximity
operator $prox_{f}$ is defined by
\begin{equation}
\begin{aligned}
prox_{f} : H & \rightarrow H  \\
x & \mapsto \arg\min_{y\in H}\ f(y) + \frac{1}{2}\|x-y\|_{2}^{2}.
\end{aligned}
\end{equation}
\end{defn}
Let $C$ be a nonempty closed convex set of $H$. The indicator
function of $C$ is defined on $H$ as
\begin{equation}
\iota_{C}(x) = \left \{
\begin{array}{ll} 0, & \textrm{ if } x \in C, \\
+\infty, & \textrm{otherwise}
\end{array} \right.
\end{equation}
It is easy to see that the proximity operator of the indicator
function is the projection operator onto $C$. That is,
$prox_{\iota_{C}}(x)=P_{C}(x)$, where $P_{C}$ represents the
projection operator onto $C$.

For some simple functions, there is a closed-form solution of their
proximity functions and we provide several examples. For other examples of
proximity operators with closed-form expression, we refer the
readers to \cite{combettesbook2010} for details.

\begin{exa}\label{example1}
Let $\lambda >0$ and $u\in R^{N}$, then
$$
[prox_{\lambda \| \cdot \|_{1}}(u)]_{i} =
\max(|u_i|-\lambda,0)sign(u_i),
$$
\end{exa}
The proximity operator of $\ell_{1}$-norm $\|\cdot\|_{1}$ is often
referred to as a soft-thresholding operator, and denoted by $Soft(u,\lambda)$,
i.e., $Soft(u,\lambda) = prox_{\lambda \| \cdot \|_{1}}(u)$.

\begin{exa}\label{example2}
Let $\lambda >0$ and $u\in R^{N}$, then
$$
prox_{\lambda \| \cdot \|_{2}}(u) = \max( \|u\|_{2}
-\lambda,0)\frac{u}{\|u\|_{2}}.
$$
\end{exa}

\begin{exa}
Let $u\in \mathds{R}^{2N}$, then the norm $\|u\|_{1,2}$ is defined by
$$
\|u\|_{1,2} = \sum_{i=1}^{N} \sqrt{u_{i}^{2}+u_{N+i}^{2}}.
$$
Let $\lambda > 0$, $x\in  \mathds{R}^{2N}$ and $\|x_{i}\|_{2}=
\sqrt{x_{i}^{2}+x_{N+i}^{2}}$, then $prox_{\lambda
\|\cdot\|_{1,2}}(x)$ can be expressed as
\begin{equation}\label{itv}
\begin{aligned}
\textrm{prox}_{\lambda \|\cdot\|_{1,2}}(x) & =  \Big[
\max \{ \|x_{i}\|_{2} - \lambda , 0   \}\frac{x_{i}}{\|x_{i}\|_{2}} ; \\
\quad &  \max \{ \|x_{i}\|_{2} - \lambda , 0
\}\frac{x_{N+i}}{\|x_{i}\|_{2}}   \Big], \ i = 1, 2, \cdots, N.
\end{aligned}
\end{equation}
\end{exa}

We also prove some proximity functions which will be used in the
following sections.

\begin{lem}\label{lemma1}
For any $u\in R^{N}$ and $b\in R^{N}$, define the function $f(x) =
\|x-b\|_{1}$, then the proximity operator of $prox_{\lambda f}(u)$
is given by
\begin{equation}
prox_{\lambda f}(u) = b + Soft(u-b,\lambda).
\end{equation}
\end{lem}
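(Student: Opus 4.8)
The plan is to reduce the proximity operator of $f(x) = \|x - b\|_1$ to that of the $\ell_1$-norm via a change of variables. Recall that $prox_{\lambda f}(u)$ is by definition the unique minimizer over $y$ of $\lambda\|y - b\|_1 + \frac{1}{2}\|u - y\|_2^2$. First I would substitute $z = y - b$, so that $y = z + b$ and $u - y = (u - b) - z$; the objective becomes $\lambda\|z\|_1 + \frac{1}{2}\|(u-b) - z\|_2^2$. This is exactly the minimization problem defining $prox_{\lambda\|\cdot\|_1}(u - b)$, so its unique minimizer is $z^\star = prox_{\lambda\|\cdot\|_1}(u-b) = Soft(u-b,\lambda)$, using Example~\ref{example1} and the notation introduced immediately after it. Undoing the substitution gives $y^\star = z^\star + b = b + Soft(u-b,\lambda)$, which is the claimed formula.

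To make this rigorous I would note that both minimization problems have strongly convex (indeed, $1$-strongly convex from the quadratic term) proper LSC objectives, so the minimizers exist and are unique, and the change of variables $z \mapsto z + b$ is an affine bijection of $R^N$ that carries one objective exactly onto the other, hence carries the unique minimizer of one onto the unique minimizer of the other. This bijection argument is what makes the formal manipulation legitimate rather than merely formal.

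There is essentially no serious obstacle here: the only thing to be careful about is tracking the shift consistently through the squared-norm term (it is $\|u - y\|_2^2 = \|(u-b) - z\|_2^2$, with the \emph{same} $b$ subtracted as in the $\ell_1$ term, which is precisely why the cancellation works), and citing Example~\ref{example1} for the explicit componentwise form of $Soft(\cdot,\lambda)$ if an explicit expression is wanted. The identity is a special case of the general translation rule $prox_{g(\cdot - b)}(u) = b + prox_g(u - b)$ for proximity operators, which one could invoke directly, but writing out the one-line substitution is cleaner and self-contained.
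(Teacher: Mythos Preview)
Your proof is correct and follows essentially the same approach as the paper: both reduce the problem via the substitution $z = y - b$ (the paper writes $y = x - b$) to the standard $\ell_1$ proximity operator and then invoke the soft-thresholding formula. Your version is slightly more careful in noting existence/uniqueness and that the affine bijection transports the minimizer, but the argument is the same.
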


\begin{proof}
By the definition of the proximity operator, we know that
$$
prox_{\lambda f}(u) = \arg \min_{x} \left\{
\frac{1}{2}\|x-u\|_{2}^{2} + \lambda \|x-b\|_{1} \right\}.
$$
Let $x-b = y$, then the above minimization problem reduces to
\begin{align*}
& \quad \arg \min_{y} \left\{ \frac{1}{2} \| y + b - u \|_{2}^{2} +
\lambda \|y\|_{1} \right\} \\
& = \arg \min_{y} \left\{  \frac{1}{2}\| y - (u-b)  \|_{2}^{2} +
\lambda \|y\|_{1} \right\} \\
& = soft(u-b,\lambda).
\end{align*}
Then, $prox_{\lambda f}(u) = b + soft(u-b,\lambda)$.

\end{proof}

\begin{lem}\label{lemma2}
For any $u\in R^{N}$ and $b\in R^{N}$, define the function $f(x) =
\frac{1}{2} \|x-b\|_{2}^{2}$; then, the proximity operator of
$prox_{\lambda f}(u)$ is given by
\begin{equation}
prox_{\lambda f}(u) = \frac{u+\lambda b}{1+\lambda}.
\end{equation}
\end{lem}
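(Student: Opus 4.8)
The plan is to apply the definition of the proximity operator directly and then use the fact that the function being minimized is smooth and strongly convex, so that its unique minimizer is pinned down by the first-order optimality (Fermat) condition. By definition,
$$
prox_{\lambda f}(u) = \arg\min_{x\in R^{N}}\left\{ \frac{1}{2}\|x-u\|_{2}^{2} + \frac{\lambda}{2}\|x-b\|_{2}^{2} \right\}.
$$
Since $\lambda>0$, the objective $x\mapsto \frac{1}{2}\|x-u\|_{2}^{2} + \frac{\lambda}{2}\|x-b\|_{2}^{2}$ is differentiable and $(1+\lambda)$-strongly convex, hence it admits a unique minimizer, characterized by the vanishing of its gradient.

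Next I would compute the gradient and set it to zero: $(x-u) + \lambda(x-b) = 0$, i.e. $(1+\lambda)x = u + \lambda b$. Solving this linear equation gives $x = \frac{u+\lambda b}{1+\lambda}$, which is exactly the claimed formula.

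Alternatively, to keep the argument purely algebraic and in line with the elementary tone of Lemma \ref{lemma1}, one can complete the square: expanding both quadratics, the coefficient of $\|x\|_{2}^{2}$ is $\frac{1+\lambda}{2}$ and the linear part is $-\langle x,\, u+\lambda b\rangle$, so the objective equals $\frac{1+\lambda}{2}\left\|x - \frac{u+\lambda b}{1+\lambda}\right\|_{2}^{2}$ plus a constant independent of $x$, and the minimizer is read off immediately.

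There is no genuine obstacle in this lemma; the only point requiring care is bookkeeping of the factor $\frac{1}{2}$ in front of $\|x-b\|_{2}^{2}$, so that the coefficients combine to $1+\lambda$ rather than $1+2\lambda$, and noting that $\lambda>0$ guarantees $1+\lambda\neq 0$ so the division is legitimate.
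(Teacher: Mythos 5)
Your proof is correct and follows essentially the same route as the paper: write out the definition of $prox_{\lambda f}(u)$, set the gradient $(x-u)+\lambda(x-b)$ to zero, and solve for $x=\frac{u+\lambda b}{1+\lambda}$. The extra remarks on strong convexity and completing the square are fine but not needed beyond what the paper does.
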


\begin{proof}
By the definition of the proximity operator, we know that
\begin{equation}\label{2normproxi}
prox_{\lambda f}(u) = \arg \min_{x} \left\{
\frac{1}{2}\|x-u\|_{2}^{2} + \frac{1}{2}\lambda \|x-b\|_{2}^{2}
\right\}.
\end{equation}
The first-order optimality condition of (\ref{2normproxi}) reduces
to
$$
0 = (x-u) + \lambda (x-b),
$$
Then $x=\frac{u+\lambda b}{1+\lambda}$. That is,
$$
prox_{\lambda f}(u) = \frac{u+\lambda b}{1+\lambda}.
$$

\end{proof}

Similarly, by Example \ref{example2}, we can deduce the proximity
operator of function $f(x) = \lambda \|x-b\|_{2}$ that
\begin{align}
prox_{\lambda \|\cdot - b\|_{2}}(u) & = \arg\min_{x}\ \left \{
\frac{1}{2}\|x-u\|_{2}^{2} + \lambda \|x-b\|_{2} \right\} \nonumber
\\
& = \max \left \{ \|u-b\|_{2} - \lambda, 0
\right\}\frac{u-b}{\|u-b\|_{2}} + b.
\end{align}

Recall that the Fenchel conjugate of a given function $f$ is defined
as $f^{*}(x) = \sup_{u}\{ <x,u> - f(u)  \}$. The proximity operator
of a function $f$ and its Fenchel conjugate $f^{*}$ are connected by
the celebrated Moreau's identity\cite{Moreau1962}:
\begin{equation}\label{moreau-identity}
x = prox_{\lambda f}(x) + \lambda
prox_{\frac{1}{\lambda}f^{*}}(\frac{x}{\lambda}).
\end{equation}

The well-known Rudi-Osher-Fatemi (ROF)\cite{rof1992} total variation
model is one of the most popular image denoising models.
The ROF model is given by
\begin{equation}
\arg \min_{x} \left \{ \frac{1}{2}\|x-u\|_{2}^{2} + \lambda
\|x\|_{TV} \right\},
\end{equation}
where $u\in R^{d}$ denotes the noisy image and $\|x\|_{TV}$ is the
total variation of $x$. Because total variation regularization can
preserve the edges of images, it has been widely used in the image
restoration and image reconstruction fields. The total variation
norm $\|x\|_{TV}$ can be viewed as the combination of a convex
function with a linear transformation. In fact, let $B$ denote an
$N\times N$ matrix defined by the following:
$$
B := \left(
                                 \begin{array}{cccc}
                                   -1 & 1 &  &  \\
                                    & \ddots & \ddots &  \\
                                    &  & -1 & 1 \\
                                    &  &  & 0 \\
                                 \end{array}
                               \right),
$$
and define matrix $D$ to be $2N^{2}\times N^{2}$, which could be
seen as a finite difference discretization of an image from
historiza and verti,
\begin{equation}
D :=\left(
      \begin{array}{c}
        I \otimes B \\
        B \otimes I \\
      \end{array}
    \right),
\end{equation}
where $I$ is the $N\times N$ identity matrix and the notation
$P\otimes Q$ denotes the Kronecker product of matrices $P$ and $Q$.

Let $x$ be an image in $R^{N^{2}}$. Two definitions of
total variation have appeared in the literature. The first is
referred to as \textit{anisotropic total variation} (ATV) and is
defined by the formula
\begin{equation}\label{atv}
\|x\|_{TV} := \varphi(Dx) =  \|Dx\|_{1},
\end{equation}
where $\varphi(z) :=\|z\|_{1}, z\in R^{2N^{2}}$, whereas the second
definition of total variation is known as \textit{isotropic total
variation } (ITV) and is defined by the equation
\begin{equation}\label{itv-matix}
\|x\|_{TV} = \varphi(Dx)= \|Dx\|_{1,2} = \sum_{i=1}^{n} \left\|
\left(
                                                 \begin{array}{c}
                                                   (Dx)_i \\
                                                   (Dx)_{n+i} \\
                                                 \end{array}
                                               \right)
 \right\|_{2},
\end{equation}
where $\varphi : R^{2N^{2}} \rightarrow R$ as
\begin{equation}
\varphi(z) := \sum_{i=1}^{N^{2}} \left\| \left(
                                      \begin{array}{c}
                                        z_i \\
                                        z_{N^{2}+i} \\
                                      \end{array}
                                    \right)
  \right\|_{2}, z\in R^{2N^{2}}.
\end{equation}

\section{A Primal-dual Proximity Algorithm for Solving
(\ref{problem2})}\label{section3}

In this section, we recall selected primal-dual proximity algorithms for
solving problem (\ref{problem2}). First, the corresponding dual
optimization problem of (\ref{problem2}) is
\begin{equation}\label{problem2-dual}
\max_{y}\ - F^{*}(y) - G^{*}(-K^{*}y).
\end{equation}
Here, $F^{*}$ and $G^{*}$ represent the Fenchel conjugate of $F$ and
$G$, respectively. Combining the primal problem (\ref{problem1}) and
dual problem (\ref{problem2-dual}) leads to the following
saddle-point problem:
\begin{equation}\label{problem2-primal-dual}
\min_{x}\max_{y}\ \langle Kx,y  \rangle + G(x) - F^{*}(y).
\end{equation}
Let problem (\ref{problem2-primal-dual}) have a solution
$(\widehat{x},\widehat{y})$, then it satisfies the following
variational inclusion
\begin{equation}
\begin{aligned}
\left(
  \begin{array}{c}
    0 \\
    0 \\
  \end{array}
\right)\in \left(
             \begin{array}{c}
               K^{*}\widehat{y} + \partial G(\widehat{x}), \\
               - K\widehat{x}+ \partial F^{*}\widehat{y}, \\
             \end{array}
           \right)
\end{aligned}
\end{equation}
where $\partial F^{*}$ and $\partial G$ are the subgradients of the
convex functions $F^{*}$ and $G$.

Chambolle and Pock\cite{chambolleandpock2011} proposed a primal-dual
proximity algorithm for solving (\ref{problem2-primal-dual}). Choosing
$(x^{0},y^{0})\in X\times Y$ and $\overline{x}^{0} = x^{0}$, the
iterative sequences $\{x^{k}\}$ and $\{y^{k}\}$ are given by
\begin{equation}\label{cp-iter}
\left\{
\begin{aligned}
 y^{k+1} & = prox_{\sigma F^{*}}(y^{k}+\sigma K
\overline{x}^{k}), \\
x^{k+1} & = prox_{\tau G}(x^{k} - \tau K^{*}y^{k+1} ), \\
\overline{x}^{k+1} & = x^{k+1} + \theta (x^{k+1}- x^{k}),
\end{aligned}\right.
\end{equation}
where $\sigma, \tau >0$, and $\theta \in [0,1]$. They proved its
convergence with the requirement of $\theta =1$ and $\sigma
\tau < 1/\|K\|^2$ in finite dimensional spaces.

Define $y^{1} = prox_{\sigma F^{*}}(y^{0}+\sigma K x^{0}) $, then
the iterative sequence (\ref{cp-iter}) can be rewritten as
\begin{equation}\label{cp-iter2}
\left\{
\begin{aligned}
x^{k+1} & = prox_{\tau G}(x^{k} - \tau K^{*}y^{k+1} ), \\
y^{k+2} & = prox_{\sigma F^{*}}(y^{k+1} + \sigma K (x^{k+1}  +
\theta (x^{k+1}-x^{k}) )).
\end{aligned}\right.
\end{equation}
Letting $y^{k+1} = y^{k}$, we can simply rewrite the iterative sequence
(\ref{cp-iter2}) as follows
\begin{equation}\label{cp-iter3}
\left\{
\begin{aligned}
x^{k+1} & = prox_{\tau G}(x^{k} - \tau K^{*}y^{k} ), \\
y^{k+1} & = prox_{\sigma F^{*}}(y^{k} + \sigma K (x^{k+1}  + \theta
(x^{k+1}-x^{k}) )).
\end{aligned}\right.
\end{equation}
The only difference between iterative sequences (\ref{cp-iter}) and
(\ref{cp-iter3}) is the initial value of $y^{0}$. Because these
iterative algorithms do not depend on the initial value of $x^{0}$
and $y^{0}$, they are actually equivalent. Therefore, the
details of the primal-dual proximity algorithm introduced by Chambolle
and Pock\cite{chambolleandpock2011} are actually those provided in Algorithm
\ref{primal-dual}.

\begin{algorithm}[H]
 \caption{Primal-dual proximity algorithm for solving (\ref{problem2})}
\begin{algorithmic}\label{primal-dual}
\STATE \textbf{Initialization:} Give $\tau, \sigma >0$, $\theta \in
[0,1]$ and choose $(x^{0},y^{0})\in X\times Y$;
 \STATE For $k=0, 1, 2, \cdots$ do
 \STATE 1.
$x^{k+1}  = prox_{\tau G}(x^{k} - \tau K^{*}y^{k} )$,
 \STATE 2. $y^{k+1}  = prox_{\sigma F^{*}}(y^{k} + \sigma K (x^{k+1}  + \theta
(x^{k+1}-x^{k}) ))$.
 \STATE end
for when some stopping criterion is satisfied
\end{algorithmic}
\end{algorithm}

\begin{thm}(\cite{chambolleandpock2011})\label{theorem1}
Let $\theta =1$ and the parameters $\sigma, \tau$ satisfy $\sigma
\tau \|K\|^{2} < 1$. Then, the sequence $(x^{k},y^{k})$ generated by
Algorithm \ref{primal-dual} converges weakly to an optimal solution
$(x^{*},y^{*})$ of the saddle-point problem
(\ref{problem2-primal-dual}).
\end{thm}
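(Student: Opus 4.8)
The plan is to recast Algorithm \ref{primal-dual} as a proximal point iteration on the primal-dual variable $z = (x,y)$ with respect to a metric induced by a suitable symmetric positive definite operator, and then invoke the standard convergence theory for such iterations. Concretely, introduce the monotone operator
\begin{equation}
T(z) = \begin{pmatrix} \partial G(x) + K^{*}y \\ \partial F^{*}(y) - Kx \end{pmatrix},
\end{equation}
whose zeros are exactly the saddle points of (\ref{problem2-primal-dual}). One checks that the $(k+1)$-st iterate of Algorithm \ref{primal-dual} satisfies the inclusion $0 \in T(z^{k+1}) + M(z^{k+1} - z^{k})$, where
\begin{equation}
M = \begin{pmatrix} \tfrac{1}{\tau} I & -K^{*} \\ -K & \tfrac{1}{\sigma} I \end{pmatrix}.
\end{equation}
The condition $\sigma\tau\|K\|^{2} < 1$ is precisely what makes $M$ a bounded, self-adjoint, \emph{strictly} positive definite operator on $X \times Y$, so that $\langle \cdot, \cdot \rangle_{M} := \langle M\cdot, \cdot\rangle$ defines an equivalent inner product. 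Thus the iteration is $z^{k+1} = (I + M^{-1}T)^{-1} z^{k}$, i.e. a proximal point step for the maximal monotone operator $M^{-1}T$ in the Hilbert space $(X\times Y, \langle\cdot,\cdot\rangle_M)$.

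The key steps, in order, would be: (i) verify that $T$ is maximal monotone (sum of a maximal monotone operator built from subdifferentials and a skew-adjoint, hence monotone and continuous, linear operator — the skew part is Lipschitz so maximality of the sum holds); (ii) compute the explicit algebra showing the iteration is equivalent to $0 \in T(z^{k+1}) + M(z^{k+1}-z^{k})$ — this amounts to writing out the two $prox$ steps via their optimality conditions and using $\theta = 1$ to absorb the extrapolation term $K x^{k+1} + \theta K(x^{k+1}-x^{k})$ correctly into the $y$-update; (iii) confirm $M \succ 0$ under $\sigma\tau\|K\|^2 < 1$, e.g. by a Schur-complement / Cauchy--Schwarz estimate $\langle Mz,z\rangle \ge (1-\sqrt{\sigma\tau}\|K\|)(\tfrac1\tau\|x\|^2 + \tfrac1\sigma\|y\|^2)$; (iv) quote the classical weak convergence theorem for the proximal point algorithm applied to a maximal monotone operator with nonempty zero set (Rockafellar, or Bauschke--Combettes), noting that the existence of a saddle point supplies a zero of $T$ and hence of $M^{-1}T$; (v) translate weak convergence in the $M$-norm back to weak convergence in the original norm, which is immediate since the norms are equivalent.

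I expect the main obstacle to be step (ii): carefully unwinding the two proximity operators into a single coupled inclusion and checking that the extrapolation parameter $\theta = 1$ is exactly what is needed for the off-diagonal $-K$, $-K^{*}$ blocks of $M$ to appear symmetrically. The $y$-update reads $y^{k+1} = \mathrm{prox}_{\sigma F^{*}}(y^{k} + \sigma K(2x^{k+1} - x^{k}))$, whose optimality condition is $0 \in \partial F^{*}(y^{k+1}) - K(2x^{k+1}-x^{k}) + \tfrac{1}{\sigma}(y^{k+1} - y^{k})$; rearranged, this is $0 \in \partial F^{*}(y^{k+1}) - Kx^{k+1} - K(x^{k+1}-x^{k}) + \tfrac1\sigma(y^{k+1}-y^{k})$, and the term $-K(x^{k+1}-x^{k})$ is exactly the $-K(x^{k+1}-x^k)$ contribution recorded by the lower-left block of $M$ acting on $z^{k+1}-z^k$. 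Similarly the $x$-update gives $0 \in \partial G(x^{k+1}) + K^{*}y^{k} + \tfrac1\tau(x^{k+1}-x^{k})$, which we rewrite using $K^{*}y^k = K^{*}y^{k+1} - K^{*}(y^{k+1}-y^k)$ to recover the upper-right block. Once this bookkeeping is done, everything else is either standard monotone-operator theory or routine linear-algebra estimates, so the proof is short modulo that verification.
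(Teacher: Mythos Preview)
Your proposal is correct, but note that the paper does not actually supply a proof of this theorem: it is stated as a citation from \cite{chambolleandpock2011} and used as a black box. So there is no ``paper's own proof'' to compare against directly.

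That said, your approach is genuinely different from the original Chambolle--Pock argument. What you have written is precisely the He--Yuan reformulation \cite{he2012} (which the present paper mentions in its introduction): recast the iteration as a proximal point step $0 \in T(z^{k+1}) + M(z^{k+1}-z^{k})$ in the metric induced by $M$, verify $M\succ 0$ via the Schur/Cauchy--Schwarz estimate, and invoke Rockafellar's weak convergence theorem. Your bookkeeping in step~(ii) is correct --- the choice $\theta=1$ is exactly what makes the off-diagonal blocks $-K$, $-K^{*}$ appear symmetrically, as you identified. By contrast, the original proof in \cite{chambolleandpock2011} proceeds by direct energy estimates: one bounds a weighted combination of $\|x^{k+1}-\widehat{x}\|^{2}/\tau + \|y^{k+1}-\widehat{y}\|^{2}/\sigma$ plus cross terms and shows Fej\'er-type monotonicity together with summability of the primal-dual gap, without ever naming the operator $M$ or invoking proximal-point theory. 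Your route is shorter and more conceptual once the monotone-operator machinery is available; the original route is more self-contained and yields explicit ergodic rate estimates along the way.
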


\begin{rmk}
Condat\cite{condat2013} proved that the  condition $\sigma \tau
\|K\|^{2} < 1$ in Theorem could be relaxed to $\sigma \tau \|K\|^{2}
\leq 1$ in finite dimensional spaces.

\end{rmk}

The convergence of Algorithm \ref{primal-dual} relies on the
operator norm $\|K\|$, which is not easy to estimate. Pock and
Chambolle\cite{pock1} attempted to address this shortcoming by
proposing a precondition technique for Algorithm \ref{primal-dual}
where the step sizes $\tau$ and $\sigma$ are replaced by two
symmetric and positive definite matrices, respectively. They also
suggested a practical approach for obtaining the matrices
$\mathrm{T}$ and $\mathrm{\Sigma}$, thereby satisfying the
convergence requirement of Theorem \ref{theorem2}.

\begin{algorithm}[H]
 \caption{Preconditioned primal-dual proximity algorithm for solving (\ref{problem2})}
\begin{algorithmic}\label{preconditioned-primal-dual}
\STATE \textbf{Initialization:} Choose symmetric and positive
definite matrices $\mathrm{T}$ and $\mathrm{\Sigma}$, $\theta \in
[0,1]$, $(x^{0}, y^{0})\in X\times Y$.
 \STATE For $k=0, 1, 2, \cdots$ do
 \STATE 1.
$x^{k+1}  = prox_{\mathrm{T} G}(x^{k} - \mathrm{T} K^{*}y^{k} )$,
 \STATE 2. $y^{k+1}  = prox_{\Sigma F^{*}}(y^{k} + \Sigma K (x^{k+1}  + \theta
(x^{k+1}-x^{k}) ))$.
 \STATE end
for when some stopping criterion is satisfied
\end{algorithmic}
\end{algorithm}

\begin{thm}(\cite{pock1})\label{theorem2}
Let $\theta =1$ and let $\mathrm{T}, \mathrm{\Sigma}$ be symmetric
and positive definite matrices such that
$\|\mathrm{\Sigma}^{\frac{1}{2}} K \mathrm{T}^{\frac{1}{2}}\|<1$.
Then, the sequence $(x^{k},y^{k})$ generated by Algorithm
\ref{preconditioned-primal-dual} converges weakly to an optimal
solution $(x^{*},y^{*})$ of the saddle-point problem
(\ref{problem2-primal-dual}).
\end{thm}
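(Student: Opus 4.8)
The plan is to reduce Theorem~\ref{theorem2} to the unpreconditioned result of Theorem~\ref{theorem1} by a change of variables that diagonalizes the metric. Since $\mathrm{T}$ and $\mathrm{\Sigma}$ are symmetric positive definite, they admit symmetric positive definite square roots $\mathrm{T}^{1/2}$ and $\mathrm{\Sigma}^{1/2}$, which are invertible. I would introduce the new iterates $\tilde x^{k} := \mathrm{T}^{-1/2} x^{k}$ and $\tilde y^{k} := \mathrm{\Sigma}^{-1/2} y^{k}$, together with the rescaled operator $\widetilde{K} := \mathrm{\Sigma}^{1/2} K \mathrm{T}^{1/2}$, so that by hypothesis $\|\widetilde{K}\| < 1$, i.e. $\tilde\sigma\tilde\tau\|\widetilde K\|^2 < 1$ with $\tilde\sigma = \tilde\tau = 1$.

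The first key step is to verify that the preconditioned proximal steps transform correctly under this change of variables. The relevant identity is that for a symmetric positive definite $M$ and any $f \in \Gamma_0(H)$,
\begin{equation}\label{prox-rescale}
\mathrm{prox}_{M f}(x) = M^{1/2}\,\mathrm{prox}_{\widetilde f}\!\big(M^{-1/2} x\big), \qquad \widetilde f(u) := f(M^{1/2} u),
\end{equation}
where $\mathrm{prox}_{Mf}$ is understood in the metric induced by $M^{-1}$, i.e. $\mathrm{prox}_{Mf}(x) = \arg\min_y f(y) + \tfrac12\langle M^{-1}(y-x), y-x\rangle$. Applying \eqref{prox-rescale} with $M = \mathrm{T}$ to step~1 and with $M = \mathrm{\Sigma}$ to step~2, and using $\mathrm{T}^{1/2} K^{*} \mathrm{\Sigma}^{1/2}\cdot = \widetilde K^{*}\cdot$ after the substitution, the iteration in Algorithm~\ref{preconditioned-primal-dual} becomes exactly Algorithm~\ref{primal-dual} applied to the saddle-point problem $\min_{\tilde x}\max_{\tilde y}\ \langle \widetilde K \tilde x, \tilde y\rangle + \widetilde G(\tilde x) - \widetilde{F^{*}}(\tilde y)$ with step sizes $\tau = \sigma = 1$ and $\theta = 1$, where $\widetilde G := G\circ \mathrm{T}^{1/2}$ and $\widetilde{F^{*}} := F^{*}\circ \mathrm{\Sigma}^{1/2}$. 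One checks that $\widetilde G, \widetilde{F^{*}} \in \Gamma_0$, that this auxiliary problem has the same optimality structure, and in particular that $(\tilde x^{*}, \tilde y^{*})$ is a saddle point of the auxiliary problem if and only if $(x^{*}, y^{*}) = (\mathrm{T}^{1/2}\tilde x^{*}, \mathrm{\Sigma}^{1/2}\tilde y^{*})$ is a saddle point of \eqref{problem2-primal-dual}; this uses $\widetilde{F^{*}} = (\widetilde F)^{*}$ where $\widetilde F := F\circ \mathrm{\Sigma}^{-1/2}$, which follows from the conjugation rule for composition with an invertible linear map.

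The second step is to invoke Theorem~\ref{theorem1}: since $\|\widetilde K\| < 1$ and $\theta = 1$, the sequence $(\tilde x^{k}, \tilde y^{k})$ converges weakly to a saddle point $(\tilde x^{*}, \tilde y^{*})$ of the auxiliary problem. Finally, since $\mathrm{T}^{1/2}$ and $\mathrm{\Sigma}^{1/2}$ are bounded linear bijections, they are weak-to-weak continuous, so $x^{k} = \mathrm{T}^{1/2}\tilde x^{k} \rightharpoonup \mathrm{T}^{1/2}\tilde x^{*} =: x^{*}$ and $y^{k} = \mathrm{\Sigma}^{1/2}\tilde y^{k} \rightharpoonup \mathrm{\Sigma}^{1/2}\tilde y^{*} =: y^{*}$, and by the equivalence noted above $(x^{*}, y^{*})$ is a saddle point of \eqref{problem2-primal-dual}. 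The main obstacle is purely bookkeeping: one must be careful that ``$\mathrm{prox}_{\mathrm{T}G}$'' in the algorithm is defined with respect to the metric $\langle \mathrm{T}^{-1}\cdot,\cdot\rangle$ (this is the standard convention that makes the preconditioned scheme well-posed), and that the identity \eqref{prox-rescale} holds with precisely that convention; getting the placement of $\mathrm{T}^{1/2}$ versus $\mathrm{T}^{-1/2}$ right in the transformation of the linear coupling term $\mathrm{T}K^{*}y^{k}$ is the only place an error could creep in. In finite dimensions (the setting of the paper's numerical applications) all weak convergences are norm convergences, so no additional care is needed there.
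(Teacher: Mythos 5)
Your proposal is correct, but note that the paper does not actually prove Theorem \ref{theorem2}: the result is quoted from \cite{pock1}, so there is no internal proof to compare against, and the original argument in \cite{pock1} is a direct one, rerunning the Chambolle--Pock estimates in the norms induced by $\mathrm{T}^{-1}$ and $\mathrm{\Sigma}^{-1}$ (a proximal-point/energy argument in the preconditioned metric) rather than a change of variables. Your route --- conjugating the iteration by $\mathrm{T}^{1/2}$ and $\mathrm{\Sigma}^{1/2}$ and invoking Theorem \ref{theorem1} with $\sigma=\tau=1$ for $\widetilde{K}=\mathrm{\Sigma}^{1/2}K\mathrm{T}^{1/2}$, $\widetilde{G}=G\circ\mathrm{T}^{1/2}$, $\widetilde{F}=F\circ\mathrm{\Sigma}^{-1/2}$ --- is therefore genuinely different, and it checks out: with $\mathrm{prox}_{\mathrm{T}G}(x)$ understood as the minimizer of $G(y)+\tfrac12\langle \mathrm{T}^{-1}(y-x),y-x\rangle$ (the convention of \cite{pock1}, and the only reading under which Algorithm \ref{preconditioned-primal-dual} is meaningful, since this paper never defines it), your rescaling identity for the prox holds, $\widetilde{K}^{*}=\mathrm{T}^{1/2}K^{*}\mathrm{\Sigma}^{1/2}$ because the square roots are self-adjoint, $(F\circ\mathrm{\Sigma}^{-1/2})^{*}=F^{*}\circ\mathrm{\Sigma}^{1/2}$ by the conjugation rule for invertible linear maps, and the first-order conditions show that $(\tilde x,\tilde y)\mapsto(\mathrm{T}^{1/2}\tilde x,\mathrm{\Sigma}^{1/2}\tilde y)$ maps saddle points of the auxiliary problem bijectively onto saddle points of (\ref{problem2-primal-dual}). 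The hypothesis $\|\mathrm{\Sigma}^{1/2}K\mathrm{T}^{1/2}\|<1$ is exactly $\sigma\tau\|\widetilde{K}\|^{2}<1$ with $\sigma=\tau=1$, so Theorem \ref{theorem1} applies, and since $\mathrm{T}^{1/2},\mathrm{\Sigma}^{1/2}$ are bounded bijections the limit transfers as you say (in the matrix setting everything is finite dimensional anyway). What each approach buys: your reduction makes Theorem \ref{theorem2} an immediate corollary of Theorem \ref{theorem1} and makes transparent that preconditioning is just the plain algorithm in a transformed metric, at the price of the metric-prox bookkeeping you flag; the direct proof of \cite{pock1} avoids any change of variables and yields the convergence estimates natively in the $\mathrm{T}^{-1}$, $\mathrm{\Sigma}^{-1}$ norms, which is also what underlies the diagonal step-size rule of Lemma \ref{lemema-cp}.
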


As mentioned in \cite{pock1}, the matrices $\mathrm{\Sigma}$ and
$\mathrm{T}$ could be any symmetric and positive matrices. However, it
is a prior requirement of Algorithm
\ref{preconditioned-primal-dual} that the proximity operators are
simple. Thus, they proposed to choose $\mathrm{\Sigma}$ and
$\mathrm{T}$ with some diagonal matrices which satisfy all these
requirements and guarantee the convergence of the algorithm.

\begin{lem}\label{lemema-cp}(\cite{pock1})
Let $\mathrm{T} = diag(\tau)$, where $\tau = (\tau_1, \tau_2,
\cdots, \tau_n)$ and $\mathrm{\Sigma}=diag(\sigma)$, where $\sigma =
(\sigma_1, \cdots, \sigma_m)$. In particular,
$$
\tau_j = \frac{1}{\sum_{i=1}^{m}|K_{i,j}|^{2-\alpha}}, \quad
\sigma_i = \frac{1}{\sum_{j=1}^{n}|K_{i,j}|^{\alpha}},
$$
then for any $\alpha \in [0,2]$,
$$
\| \mathrm{\Sigma}^{\frac{1}{2}}  K  \mathrm{T}^{\frac{1}{2}} \|^2 =
\sup_{x\in X, x\neq 0} \frac{\| \mathrm{T}^{\frac{1}{2}} K
\mathrm{\Sigma}^{\frac{1}{2}}x \|^2  }{\|x\|^2}\leq 1.
$$
\end{lem}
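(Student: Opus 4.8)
The plan is to reduce the claim to the elementary bound $\norm{Mx}\le\norm{x}$ for every $x$, where $M:=\mathrm{\Sigma}^{\frac12}K\mathrm{T}^{\frac12}$ (an $m\times n$ matrix with entries $M_{i,j}=\sqrt{\sigma_i}\,K_{i,j}\,\sqrt{\tau_j}$), and to obtain this bound by two applications of the Cauchy--Schwarz inequality. Since $\|\mathrm{\Sigma}^{\frac12}K\mathrm{T}^{\frac12}\|=\|\mathrm{T}^{\frac12}K^{*}\mathrm{\Sigma}^{\frac12}\|$ (a matrix and its transpose have the same spectral norm), the two quantities displayed in the statement coincide, so it is enough to control the first. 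The one idea that makes everything work is to split each modulus as $\abs{K_{i,j}}=\abs{K_{i,j}}^{(2-\alpha)/2}\cdot\abs{K_{i,j}}^{\alpha/2}$; the hypothesis $\alpha\in[0,2]$ is precisely what guarantees that both exponents are nonnegative, so this factorization is legitimate.

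Concretely, I would first write $\norm{Mx}^2=\sum_{i}\sigma_i\bigl(\sum_{j}K_{i,j}\sqrt{\tau_j}x_j\bigr)^2$ and bound the inner sum in absolute value; after inserting the split above, Cauchy--Schwarz (pairing $\abs{K_{i,j}}^{(2-\alpha)/2}$ against $\abs{K_{i,j}}^{\alpha/2}\sqrt{\tau_j}\abs{x_j}$) gives, for each $i$,
\[
\Bigl(\sum_{j}\abs{K_{i,j}}\sqrt{\tau_j}\abs{x_j}\Bigr)^2\le\Bigl(\sum_{j}\abs{K_{i,j}}^{2-\alpha}\Bigr)\Bigl(\sum_{j}\abs{K_{i,j}}^{\alpha}\tau_j x_j^2\Bigr)=\frac{1}{\sigma_i}\sum_{j}\abs{K_{i,j}}^{\alpha}\tau_j x_j^2,
\]
using the definition $\sigma_i=\bigl(\sum_{j}\abs{K_{i,j}}^{2-\alpha}\bigr)^{-1}$. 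Substituting back, the factor $\sigma_i$ cancels; then I would interchange the order of summation and invoke the definition $\tau_j=\bigl(\sum_{i}\abs{K_{i,j}}^{\alpha}\bigr)^{-1}$, which collapses the double sum:
\[
\norm{Mx}^2\le\sum_{i}\sum_{j}\abs{K_{i,j}}^{\alpha}\tau_j x_j^2=\sum_{j}\tau_j x_j^2\sum_{i}\abs{K_{i,j}}^{\alpha}=\sum_{j}x_j^2=\norm{x}^2 .
\]
Taking the supremum over $x\neq 0$ then yields $\|\mathrm{\Sigma}^{\frac12}K\mathrm{T}^{\frac12}\|^2\le 1$.

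I do not expect a genuine obstacle here; the argument is two lines of Cauchy--Schwarz plus bookkeeping. The only place that needs a remark is the degenerate situation in which some column or row of $K$ is identically zero (so the corresponding $\tau_j$ or $\sigma_i$ is ill-defined), together with the boundary exponents $\alpha=0$ and $\alpha=2$: one handles these uniformly by restricting, in each inner sum, to indices with $K_{i,j}\neq 0$ (equivalently by the convention $0^{0}:=0$ there), which is harmless since the omitted terms vanish, and then all the cancellations above go through verbatim.
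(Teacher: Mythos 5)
Your overall strategy---split $|K_{i,j}|=|K_{i,j}|^{(2-\alpha)/2}\,|K_{i,j}|^{\alpha/2}$, apply Cauchy--Schwarz row by row, then interchange the order of summation so the weights cancel---is exactly the argument behind this lemma; note the paper itself offers no proof (the result is quoted from Pock and Chambolle), and their proof is precisely this computation. However, as written your bookkeeping does not match the weights in the statement. Your pairing produces the factors $\sum_j|K_{i,j}|^{2-\alpha}$ and $\sum_i|K_{i,j}|^{\alpha}$, and you cancel them by quoting ``$\sigma_i=(\sum_j|K_{i,j}|^{2-\alpha})^{-1}$'' and ``$\tau_j=(\sum_i|K_{i,j}|^{\alpha})^{-1}$''; but the lemma defines $\sigma_i=(\sum_j|K_{i,j}|^{\alpha})^{-1}$ and $\tau_j=(\sum_i|K_{i,j}|^{2-\alpha})^{-1}$, so both identifications are false unless $\alpha=1$, and the displayed chain does not prove the stated inequality for the stated weights.

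The repair is immediate but should be made explicit: pair the factors the other way, taking $a_j=|K_{i,j}|^{\alpha/2}$ and $b_j=|K_{i,j}|^{(2-\alpha)/2}\sqrt{\tau_j}\,|x_j|$, so that Cauchy--Schwarz gives
\[
\Bigl(\sum_j |K_{i,j}|\sqrt{\tau_j}\,|x_j|\Bigr)^2\le\Bigl(\sum_j|K_{i,j}|^{\alpha}\Bigr)\Bigl(\sum_j|K_{i,j}|^{2-\alpha}\tau_j x_j^2\Bigr)=\frac{1}{\sigma_i}\sum_j|K_{i,j}|^{2-\alpha}\tau_j x_j^2 ,
\]
after which multiplying by $\sigma_i$, summing over $i$, interchanging sums, and using $\sum_i|K_{i,j}|^{2-\alpha}=1/\tau_j$ collapses the double sum to $\|x\|^2$, exactly as in the cited source. (Equivalently, what you actually proved is the statement with $\alpha$ replaced by $2-\alpha$, which covers the same range $[0,2]$; that observation would also rescue your version, but then it must be stated rather than obtained by misquoting the definitions.) Your side remarks are fine: reading the paper's $\mathrm{T}^{1/2}K\mathrm{\Sigma}^{1/2}$ in the supremum as a typo for the adjoint expression is the right interpretation, and the convention handling zero rows or columns and the endpoint exponents $\alpha\in\{0,2\}$ is harmless.
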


In the next section, we shall see how to judiciously use the
primal-dual proximity algorithms, including Algorithm
\ref{primal-dual} and Algorithm \ref{preconditioned-primal-dual}, to
derive a variety of flexible convex optimization algorithms for the
proposed problem (\ref{problem1}).

\section{A Splitting Primal-dual Proximity Algorithm for Solving
(\ref{problem1})}\label{section4}

In comparison with the well-known forward-backward splitting
algorithm and the alternating direction method of multipliers for
solving problem (\ref{problem2}), the forward-backward splitting
algorithm needs one of the functions $F$ or $G$ to satisfy the
differential and requires a Lipschitz continuous gradient, and the
alternating direction method of multipliers always involves a system
of linear equations as its subproblem. In contrast, every subproblem of the
primal-dual proximity algorithm is easy to solve and does not require any inner
iteration numbers. This motivated us to extend the primal-dual
proximity algorithm to solve the general optimization problem
(\ref{problem1})

First, we present the main iterative algorithm to solve problem
(\ref{problem1}) and prove its convergence as follows.

\begin{algorithm}[H]
 \caption{A splitting primal-dual proximity algorithm for solving (\ref{problem1})}
\begin{algorithmic}\label{splitting-primal-dual}
\STATE \textbf{Initialization:} Give $\tau, \sigma >0$ such that
$\tau \sigma < 1/\|\sum_{i=1}^{l}K_{i}^{*}K_i\|$, choose
$(x^{0},y_{1}^{0}, y_{2}^{0}, \cdots, y_{l}^{0})\in X\times
Y_{1}\times Y_{2}\times \cdots \times Y_{l}$;
 \STATE For $k=0, 1, 2, \cdots$ do
 \STATE 1.
$x^{k+1} = prox_{\tau G}(x^{k} - \tau
\sum_{i=1}^{l}K_{i}^{*}y_{i}^{k} ) $,
 \STATE 2. $y_{i}^{k+1} = prox_{\sigma F_{i}^{*}}(y_{i}^{k}+\sigma K_{i}
(2x^{k+1}-x^{k}))$, for $i=1, 2, \cdots, l$.
 \STATE end
for when some stopping criterion is satisfied
\end{algorithmic}
\end{algorithm}

The dual problem of (\ref{problem1}) is
\begin{equation}
\max_{y_{1}, \cdots, y_l}\ - G^{*}\left( -\sum_{i=1}^{l}K_{i}^{*}y_i
\right) - \sum_{i=1}^{l}F_{i}^{*}(y_i),
\end{equation}
and the saddle-point problem is
\begin{equation}\label{problem1-primal-dual}
\min_{x}\max_{y_1, \cdots, y_l}\ \sum_{i=1}^{l}\langle K_i x, y_i
\rangle + G(x) - \sum_{i=1}^{l}F_{i}^{*}(y_i).
\end{equation}

\begin{thm}
Let $\sigma>0$ and $\tau>0$ be the parameters of Algorithm
\ref{splitting-primal-dual}, then the iterative sequence $(x^{k},
y_{1}^{k}, \cdots, y_{l}^{k})$ converges weakly to an optimal
solution $(x^{*}, y_{1}^{*}, \cdots, y_{l}^{*})$ of the saddle-point
problem (\ref{problem1-primal-dual}).
\end{thm}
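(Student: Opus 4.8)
The plan is to reduce Algorithm \ref{splitting-primal-dual} to the original primal-dual proximity algorithm (Algorithm \ref{primal-dual}) via the standard product-space reformulation, and then invoke Theorem \ref{theorem1} directly. Concretely, I would set $\mathbf{Y} := Y_1 \times Y_2 \times \cdots \times Y_l$ with the natural inner product $\langle (y_1,\dots,y_l),(z_1,\dots,z_l)\rangle = \sum_{i=1}^l \langle y_i, z_i\rangle$, define the block operator $\mathbf{K}: X \to \mathbf{Y}$ by $\mathbf{K}x = (K_1 x, K_2 x, \dots, K_l x)$, and define $\mathbf{F} \in \Gamma_0(\mathbf{Y})$ by $\mathbf{F}(y_1,\dots,y_l) = \sum_{i=1}^l F_i(y_i)$. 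The first thing to check is the bookkeeping: $\mathbf{K}^*(y_1,\dots,y_l) = \sum_{i=1}^l K_i^* y_i$, so that $\sum_{i=1}^l \langle K_i x, y_i\rangle = \langle \mathbf{K}x, \mathbf{y}\rangle$, which shows that the saddle-point problem (\ref{problem1-primal-dual}) is exactly (\ref{problem2-primal-dual}) with $(K,F)$ replaced by $(\mathbf{K},\mathbf{F})$. Likewise $\mathbf{F}^*(y_1,\dots,y_l) = \sum_{i=1}^l F_i^*(y_i)$ because the conjugate of a separable sum is the separable sum of conjugates, and hence, by separability, $prox_{\sigma \mathbf{F}^*}(y_1,\dots,y_l) = (prox_{\sigma F_1^*}(y_1),\dots,prox_{\sigma F_l^*}(y_l))$.

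Next I would verify that the two iterations coincide. Applying Algorithm \ref{primal-dual} with $\theta = 1$ to the data $(\mathbf{K},\mathbf{F},G)$ gives
\begin{align*}
x^{k+1} &= prox_{\tau G}\bigl(x^k - \tau \mathbf{K}^* \mathbf{y}^k\bigr) = prox_{\tau G}\Bigl(x^k - \tau \textstyle\sum_{i=1}^l K_i^* y_i^k\Bigr), \\
\mathbf{y}^{k+1} &= prox_{\sigma \mathbf{F}^*}\bigl(\mathbf{y}^k + \sigma \mathbf{K}(2x^{k+1} - x^k)\bigr),
\end{align*}
and the second line is, componentwise, $y_i^{k+1} = prox_{\sigma F_i^*}(y_i^k + \sigma K_i(2x^{k+1} - x^k))$ for each $i$. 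This is precisely the update in Algorithm \ref{splitting-primal-dual}, so the sequence $(x^k, y_1^k, \dots, y_l^k)$ produced by Algorithm \ref{splitting-primal-dual} is identical to the sequence $(x^k, \mathbf{y}^k)$ produced by Algorithm \ref{primal-dual} on the product data.

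The remaining point is the step-size condition. Theorem \ref{theorem1} requires $\sigma \tau \|\mathbf{K}\|^2 < 1$, so I would show $\|\mathbf{K}\|^2 = \|\sum_{i=1}^l K_i^* K_i\|$. Indeed $\|\mathbf{K}\|^2 = \|\mathbf{K}^*\mathbf{K}\|$ and $\mathbf{K}^*\mathbf{K}x = \sum_{i=1}^l K_i^* K_i x$, which gives the identity; hence the initialization hypothesis $\tau\sigma < 1/\|\sum_{i=1}^l K_i^* K_i\|$ is exactly $\sigma\tau\|\mathbf{K}\|^2 < 1$. Applying Theorem \ref{theorem1} then yields weak convergence of $(x^k,\mathbf{y}^k)$ to a saddle point $(x^*,\mathbf{y}^*) = (x^*, y_1^*,\dots,y_l^*)$ of (\ref{problem2-primal-dual}) for the product data, which unpacks to a saddle point of (\ref{problem1-primal-dual}), completing the proof.

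I do not anticipate a genuine obstacle here; the only mild subtlety is making sure the product-space inner product is chosen so that $\mathbf{K}^*$ has the clean form $\sum K_i^* y_i$ (as opposed to a weighted version), and confirming separability of both the conjugate and the proximity operator of $\mathbf{F}$ — both are standard. One caveat worth noting in the statement itself: Theorem \ref{theorem1} is proved in finite-dimensional spaces, so strictly speaking the "Hilbert spaces" in the theorem should either be taken finite-dimensional or the proof should instead cite the infinite-dimensional version in \cite{he2012}; I would add a sentence to that effect rather than leave the hypotheses ambiguous.
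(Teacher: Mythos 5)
Your proposal follows essentially the same route as the paper: the product-space reformulation with $\mathbf{Y}=Y_1\times\cdots\times Y_l$, $\mathbf{K}x=(K_1x,\dots,K_lx)$, $\mathbf{F}(\mathbf{y})=\sum_i F_i(y_i)$, the separability of $\mathbf{F}^*$ and its proximity operator, and a direct appeal to Theorem \ref{theorem1}. Your additional checks (the adjoint formula $\mathbf{K}^*\mathbf{y}=\sum_i K_i^* y_i$, the identity $\|\mathbf{K}\|^2=\|\sum_i K_i^* K_i\|$ matching the step-size condition, and the remark that Theorem \ref{theorem1} as cited is finite-dimensional so the weak-convergence claim needs \cite{he2012}) are correct and in fact tighten details the paper leaves implicit.
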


\begin{proof}
First, we convert the optimization problem (\ref{problem1}) into the
form of problem (\ref{problem2}) by using  a product spaces
technique. For this purpose, we introduce the notation $\textbf{y}
:= (y_1, \cdots, y_l)$ for an element of the Hilbert space $
\textit{Y} := \textit{Y}_{1}\times \cdots \textit{Y}_l$, equipped
with the inner product $\langle \textbf{y}, \textbf{z} \rangle =
\sum_{i=1}^{l}\langle y_i, z_i \rangle$. For any $\textbf{y}\in
\textit{Y}$, we define the function $F\in \Gamma_{0}(Y)$ by
$\widetilde{\textbf{F}}(\textbf{y}) = \sum_{i=1}^{l}F_{i}(y_i)$ and
the linear function $\widetilde{\textbf{K}}: X\rightarrow
\textit{Y}$ by $\widetilde{\textbf{K}}x := (K_1 x, \cdots, K_{l}x)$,
i.e.,
$$
\widetilde{\textbf{K}} = \left(
                  \begin{array}{c}
                    K_1 \\
                    K_2 \\
                    \vdots \\
                    K_l \\
                  \end{array}
                \right).
$$
Then, we know that $(\widetilde{\textbf{F}} \circ
\widetilde{\textbf{K}})(x) = F_1 (K_1 x) + F_2 (K_2 x) + \cdots +
F_l (K_1 x)$. Therefore, the optimization problem (\ref{problem1})
can be reformulated as the following
$$
\min_{x}\ (\widetilde{\textbf{F}} \circ \widetilde{\textbf{K}})(x) +
G(x),
$$
which is the exact optimization problem (\ref{problem2}). Taking
$\theta =1$ in Algorithm \ref{primal-dual}, we obtain the iterative
sequence for solving (\ref{problem1}).
\begin{equation}\label{compact-primal-dual}
\left \{
\begin{aligned}
x^{k+1} & = prox_{\tau G}(x^{k}-\tau \widetilde{\textbf{K}}^{*}\textbf{y}^{k}), \\
\textbf{y}^{k+1} & = prox_{\sigma
\widetilde{\textbf{F}}^{*}}(\textbf{y}^{k}+ \sigma
\widetilde{\textbf{K}} (2x^{k+1}-x^{k})).
\end{aligned}\right.
\end{equation}
By Theorem \ref{theorem1}, we can conclude that the iterative
sequence $(x^{k}, y_{1}^{k}, \cdots, y_{l}^{k})$ converges weakly to
an optimal solution $(x^{*}, y_{1}^{*}, \cdots, y_{l}^{*})$ of the
saddle-point problem (\ref{problem1-primal-dual}). Further, as the
function $\widetilde{\textbf{F}}$ is separable with variables, the
Fenchel conjugate of $\widetilde{\textbf{F}}^{*}(\textbf{u}) =
F_{1}^{*}(u_1) + F_{2}^{*}(u_2) + \cdots + F_{l}^{*}(u_l)$, for
$\textbf{u} := (u_1, u_2, \cdots, u_l)\in \textit{Y}$. Then, the
proximity operator $prox_{\sigma \widetilde{\textbf{F}}^{*}}$ can be
calculated independently, i.e., $prox_{\sigma
\widetilde{\textbf{F}}^{*}}(\textbf{u}) = (prox_{\sigma
F_{1}^{*}}(u_1), \cdots, prox_{\sigma F_{l}^{*}}(u_l))$. Therefore,
we can split the iterative sequence (\ref{compact-primal-dual}) and
obtain the corresponding  Algorithm \ref{splitting-primal-dual} as
stated before.

\end{proof}

\begin{rmk}
Based on the results of Condat, the parameters can be relaxed to $\tau
\sigma \leq 1/ \|\sum_{i=1}^{l}K_{i}^{T}K_{i}\|$ in a finite
dimensional Hilbert space.
\end{rmk}

\begin{algorithm}[H]
 \caption{A preconditioned splitting primal-dual proximity algorithm for solving (\ref{problem1})}
\begin{algorithmic}\label{preconditioned-splitting-primal-dual}
\STATE \textbf{Initialization:} Choose symmetric and positive
definite matrices $\mathrm{T}$ and $\mathrm{\Sigma_{i}}$, for $i=1,
2, \cdots, l$,
 $(x^{0},y_{1}^{0}, y_{2}^{0},
\cdots, y_{l}^{0})\in X\times Y_{1}\times Y_{2}\times \cdots \times
Y_{l}$;
 \STATE For $k=0, 1, 2, \cdots$ do
 \STATE 1.
$x^{k+1} = prox_{\mathrm{T} G}(x^{k} - \mathrm{T}
\sum_{i=1}^{l}K_{i}^{*}y_{i}^{k} ) $,
 \STATE 2. $y_{i}^{k+1} = prox_{\mathrm{\Sigma_{i}} F_{i}^{*}}(y_{i}^{k}+\mathrm{\Sigma_{i}}K_{i}
(2x^{k+1}-x^{k}))$, for $i=1, 2, \cdots, l$.
 \STATE end
for when some stopping criterion is satisfied
\end{algorithmic}
\end{algorithm}

Based on Lemma \ref{lemema-cp}, we are able to suggest a practical way to
choose the matrices $\mathrm{T}$ and
$(\mathrm{\Sigma_{k}})_{k=1}^{l}$, respectively.
\begin{lem}\label{ourlemms}
Let $\mathrm{T} = diag(\tau)$, where $\tau = (\tau_1, \tau_2,
\cdots, \tau_n)$ and $\mathrm{\Sigma_k}=diag(\sigma^{k})$, where
$\sigma^{k} = (\sigma^{k}_{1}, \cdots, \sigma^{k}_{m_k})$, for $k=1,
2, \cdots, l$. In particular,
$$
\tau_j =
\frac{1}{\sum_{k=1}^{l}\sum_{i=1}^{m}|K_{k}(i,j)|^{2-\alpha}}, \quad
\sigma^{k}_{i} = \frac{1}{\sum_{j=1}^{n}|K_{k}(i,j)|^{\alpha}},
$$
then for any $\alpha \in [0,2]$,
$$
\| \mathrm{T}^{\frac{1}{2}} \widetilde{K}
\widetilde{\mathrm{\Sigma}}^{\frac{1}{2}}  \|^2 = \sup_{x\in X,
x\neq 0} \frac{\| \mathrm{T}^{\frac{1}{2}} \widetilde{K}
\widetilde{\mathrm{\Sigma}}^{\frac{1}{2}}x \|^2  }{\|x\|^2}\leq 1,
$$
where $\widetilde{K} = (K_1; K_2; \cdots ; K_l)$ and
$\widetilde{\mathrm{\Sigma}} = (\mathrm{\Sigma}_{1};
\mathrm{\Sigma}_{2}; \cdots; \mathrm{\Sigma}_{l})$.
\end{lem}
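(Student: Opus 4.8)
The plan is to reduce Lemma \ref{ourlemms} directly to Lemma \ref{lemema-cp} via the product-space reformulation already used in the proof of the preceding theorem. First I would stack the constraint blocks: writing $\widetilde{K} = (K_1; K_2; \cdots; K_l)$ as an operator from $X \cong \Real^n$ into the product space $Y = Y_1 \times \cdots \times Y_l \cong \Real^{m}$ with $m = \sum_{k=1}^{l} m_k$, the operator $\widetilde{K}$ is represented by the $m \times n$ matrix whose rows are exactly all the rows of $K_1$, then all rows of $K_2$, and so on. Label a generic row of this stacked matrix by a pair $(k,i)$ with $1 \le k \le l$ and $1 \le i \le m_k$, so that $\widetilde{K}_{(k,i),j} = K_k(i,j)$. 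Then $\widetilde{\mathrm{\Sigma}} = \mathrm{diag}(\mathrm{\Sigma}_1, \ldots, \mathrm{\Sigma}_l)$ is the $m \times m$ diagonal matrix whose $(k,i)$-th diagonal entry is $\sigma^k_i$, and $\mathrm{T} = \mathrm{diag}(\tau_1, \ldots, \tau_n)$ as given.

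The key observation is that applying Lemma \ref{lemema-cp} to the single matrix $\widetilde{K}$ with this row indexing produces exactly the quantities in the statement. Indeed, Lemma \ref{lemema-cp} prescribes, for a matrix with row index set $R$ and column index set $\{1,\ldots,n\}$, the step sizes
\[
\tau_j = \frac{1}{\sum_{r \in R} |\widetilde{K}_{r,j}|^{2-\alpha}}, \qquad \sigma_r = \frac{1}{\sum_{j=1}^{n} |\widetilde{K}_{r,j}|^{\alpha}},
\]
and concludes $\|\widetilde{\mathrm{\Sigma}}^{1/2} \widetilde{K} \mathrm{T}^{1/2}\|^2 \le 1$. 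Substituting $r = (k,i)$ and $\widetilde{K}_{(k,i),j} = K_k(i,j)$, the sum $\sum_{r \in R}$ becomes the double sum $\sum_{k=1}^{l}\sum_{i=1}^{m_k}$, so $\tau_j = \big(\sum_{k=1}^{l}\sum_{i} |K_k(i,j)|^{2-\alpha}\big)^{-1}$ and $\sigma_{(k,i)} = \sigma^k_i = \big(\sum_{j} |K_k(i,j)|^{\alpha}\big)^{-1}$, which are precisely the formulas in Lemma \ref{ourlemms} (the inner sum over $i$ running to $m$ versus $m_k$ is harmless, since $K_k(i,j)$ is simply taken to be zero for $i > m_k$, or one reads $m$ as $m_k$). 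Finally, $\|\mathrm{T}^{1/2} \widetilde{K} \widetilde{\mathrm{\Sigma}}^{1/2}\| = \|(\widetilde{\mathrm{\Sigma}}^{1/2} \widetilde{K} \mathrm{T}^{1/2})^{*}\| = \|\widetilde{\mathrm{\Sigma}}^{1/2} \widetilde{K} \mathrm{T}^{1/2}\|$ since the operator norm is invariant under taking adjoints, so the bound $\le 1$ transfers to the quantity displayed in the lemma, and the supremum characterization is just the definition of the operator norm.

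There is essentially no hard analytic content here; the only thing requiring care is bookkeeping — making sure the row-index bijection between $R$ and $\{(k,i)\}$ is stated cleanly, that the diagonal structure of $\widetilde{\mathrm{\Sigma}}$ matches the per-row step sizes $\sigma_r$ from Lemma \ref{lemema-cp}, and that the adjoint/transpose swap between $\mathrm{T}^{1/2}\widetilde{K}\widetilde{\mathrm{\Sigma}}^{1/2}$ and $\widetilde{\mathrm{\Sigma}}^{1/2}\widetilde{K}\mathrm{T}^{1/2}$ is invoked correctly. I would therefore write the proof in three short steps: (i) set up the stacked matrix $\widetilde{K}$ and the block-diagonal $\widetilde{\mathrm{\Sigma}}$ with the row relabeling; (ii) verify that the step-size formulas of Lemma \ref{lemema-cp} applied to $\widetilde{K}$ coincide with those in the statement; (iii) invoke Lemma \ref{lemema-cp} and the adjoint-invariance of the norm to conclude. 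The main (minor) obstacle is notational consistency with the slightly loose indexing in the statement (the "$\sum_{i=1}^{m}$" in $\tau_j$), which I would resolve by the zero-padding convention mentioned above.
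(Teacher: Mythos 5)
Your proposal is correct and is exactly the route the paper intends: the paper states Lemma \ref{ourlemms} without a written proof, presenting it as an immediate consequence of Lemma \ref{lemema-cp} applied to the stacked operator $\widetilde{K}$ with block-diagonal $\widetilde{\mathrm{\Sigma}}$, which is precisely your reduction (including your sensible handling of the paper's loose indexing, where the inner sum should run to $m_k$, and of the adjoint/order-of-factors slip in the displayed norm, which is a typo inherited from the statement of Lemma \ref{lemema-cp} itself). No substantive difference from the paper's approach.
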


\begin{rmk}
The advantages of our approach are the following:

 (i)\
There are limited assumptions for the functions
$\{F_i\}_{i=1}^{l}$ and $G$;

(ii)\ There is no inner iteration involved in the main process;

(iii)\ The iterative parameters are easy to select.
\end{rmk}

\section{Applications}\label{section5}

In this section, we consider solving the following constrained
composite optimization problem,

\begin{equation}\label{constrained-l2-l1-tv}
\begin{aligned}
& \min_{x}\ \frac{1}{2}w_1 \|Ax-b\|_{2}^{2} + w_2 \|Ax-b\|_1 +
\lambda \|x\|_{TV}, \\
& s.t.\ x\in C,
\end{aligned}
\end{equation}
where $x\in R^{n}$, $A\in R^{m\times n}$, $b\in R^{m}$, $C$ is a
closed convex set, $w_1, w_2\in [0,1]$ satisfying $w_1 + w_2 =1$,
$\lambda$ is the regularization parameter, and $\|x\|_{TV}$ denotes
the total variation (TV) norm.

It is easy to see that problem (\ref{constrained-l2-l1-tv}) includes
the well-known $L2+TV$ (\ref{L2-TV}) and $L1+TV$ (\ref{L1+TV})
problem as its special case. If $w_2 = 0$, then it reduces to the
constrained $L2+TV$ problem, and if $w_1 =0$, then it reduces to the
constrained $L1+TV$ problem, respectively.

In the following, we show that the optimization problem
(\ref{constrained-l2-l1-tv}) is a special case of problem
(\ref{problem1}). The flexibility of problem (\ref{problem1}) lies
in the ease with which constraints can be incorporated into this problem.
It is observed from the definition of the total variation semi-norm (\ref{atv})
and (\ref{itv-matix}) that $\|x\|_{TV} = (\varphi \circ D)(x)$, with
$\varphi$ a convex lower semicontinuous function and $D$ a real
matrix. Then, the optimization problem (\ref{constrained-l2-l1-tv})
can be reformulated as follows.
\begin{equation}\label{constrained-l2-l1-tv-equal}
\min_{x}\ \frac{1}{2}w_1 \|Ax-b\|_{2}^{2} + w_2 \|Ax-b\|_1 + \lambda
\varphi (Dx) + \iota_{C}(x),
\end{equation}
where $\iota_{C}$ is the indicator function of the closed convex set
$C$.

To match the formulation (\ref{problem1}) with the problem at hand
(\ref{constrained-l2-l1-tv-equal}), we follow two approaches to obtain
its solution.

\noindent\textbf{Method I.} Let $G(x) = 0$, $F_{1}(v) =
\frac{1}{2}w_1\|v-b\|_{2}^{2}$, $K_1 = A$, $F_{2}(v)= w_2
\|v-b\|_{1}$, $K_2 = A$, $F_3(v)=\lambda \varphi(v)$, $K_3 = D$,
$F_4(v)=\iota_{C}(v)$, and $K_4 = I$. Then, we can apply Algorithm
\ref{splitting-primal-dual} to solve the problem
(\ref{constrained-l2-l1-tv-equal}). The detailed structure of the algorithm
is presented as follows.

\begin{algorithm}[H]
 \caption{A first class of splitting primal-dual proximity algorithm for solving problem (\ref{constrained-l2-l1-tv})}
\begin{algorithmic}\label{first-splitting}
\STATE \textbf{Initialization:} Give $\tau, \sigma >0$ such that
$\tau \sigma \leq 1/\| 2A^{T}A + D^{T}D + I \|$, choose
$(x^{0},y_{1}^{0}, y_{2}^{0}, y_{3}^{0}, y_{4}^{0})\in X\times
Y_{1}\times Y_{2}\times Y_{3} \times Y_{4}$;
 \STATE For $k=0, 1, 2, \cdots$ do
 \STATE 1.
$x^{k+1} = x^{k} - \tau
(A^{T}y_{1}^{k}+A^{T}y_{2}^{k}+D^{T}y_{3}^{k}+y_{4}^{k})$,
 \STATE 2. $y_{1}^{k+1} = prox_{\sigma F_{1}^{*}}(y_{1}^{k}+\sigma A
(2x^{k+1}-x^{k}))$,
 \STATE 3. $y_{2}^{k+1} = prox_{\sigma F_{2}^{*}}(y_{2}^{k}+\sigma A
(2x^{k+1}-x^{k}))$,
 \STATE 4. $y_{3}^{k+1} = prox_{\sigma F_{3}^{*}}(y_{3}^{k}+\sigma D
(2x^{k+1}-x^{k}))$,
 \STATE 5. $y_{4}^{k+1} = prox_{\sigma F_{4}^{*}}(y_{4}^{k}+\sigma
(2x^{k+1}-x^{k}))$.
 \STATE end
for when some stopping criterion is satisfied
\end{algorithmic}
\end{algorithm}

In the following, we explain that every subproblem of Algorithm
\ref{first-splitting} can be calculated explicitly. In fact, the
proximal operator of $F^{*}$ is determined via one of the functions $F$
obtained by using Moreau's identity (\ref{moreau-identity}).

First, according to Moreau's identity (\ref{moreau-identity})
and Lemma \ref{lemma2}, we have
\begin{align}
y_{1}^{k+1} & = prox_{\sigma F_{1}^{*}}(\sigma
(\frac{1}{\sigma}y_{1}^{k} + A(2x^{k+1}-x^{k}))) \nonumber \\
& = \sigma (I -
prox_{\frac{1}{\sigma}F_{1}})(\frac{1}{\sigma}y_{1}^{k}+
A(2x^{k+1}-x^{k})) \nonumber \\
& = (y_{1}^{k}+\sigma A(2x^{k+1}-x^{k})) -
\frac{\sigma}{w_1+\sigma}(y_{1}^{k}+\sigma A(2x^{k+1}-x^{k}) + w_1
b) \nonumber \\
& = \frac{w_1}{w_1 + \sigma} ( y_{1}^{k} + \sigma A (2x^{k+1}-x^{k})
- \sigma b ).
\end{align}

Second, by Lemma \ref{lemma1}, we can obtain the proximity operator
of function $\sigma F_{2}^{*}$. That is
\begin{align}
y_{2}^{k+1} & = prox_{\sigma F_{2}^{*}}(\sigma (\frac{1}{\sigma}
y_{2}^{k} +
A(2x^{k+1}-x^{k}))) \nonumber \\
& = \sigma (I - prox_{\frac{1}{\sigma}F_{2}})(\frac{1}{\sigma}
y_{2}^{k} + A(2x^{k+1}-x^{k})) \nonumber \\
& = (y_{2}^{k}+\sigma A(2x^{k+1}-x^{k})) - \sigma (b +
Soft(\frac{1}{\sigma} y_{2}^{k} + A(2x^{k+1}-x^{k}) - b,
\frac{w_2}{\sigma})).
\end{align}

Third, by taking into account the definition of the TV norm, the function $F_3(v)$ is
equal to $\|v\|_{1}$ or $\|v\|_{1,2}$, respectively. Then, the
proximity of $\sigma F_{3}^{*}$ can also be calculated by
\begin{align}
y_{3}^{k+1} & = prox_{\sigma F_{3}^{*}}(\sigma
(\frac{1}{\sigma}y_{3}^{k} + D(2x^{k+1}-x^{k}) )) \nonumber \\
& = \sigma (I -
prox_{\frac{1}{\sigma}F_{3}})(\frac{1}{\sigma}y_{3}^{k} +
D(2x^{k+1}-x^{k})).
\end{align}
Then, for the anisotropic TV (ATV), we have
\begin{equation}
y_{3}^{k+1} = (y_{3}^{k} + \sigma D(2x^{k+1}-x^{k}) ) - \sigma
Soft(\frac{1}{\sigma}y_{3}^{k} + D(2x^{k+1}-x^{k}),
\frac{\lambda}{\sigma}).
\end{equation}
and for the isotropic TV (ITV), we also have a closed-form solution
due to (\ref{itv}).

Fourth, because the proximity of indicator function $\iota_{C}$ is
equal to the projection operator onto the set $C$, we obtain
\begin{align}
y_{4}^{k+1} & = prox_{\sigma F_{4}^{*}}(\sigma
(\frac{1}{\sigma}y_{4}^{k} + (2x^{k+1}-x^{k}) )) \nonumber \\
& = \sigma (I -
prox_{\frac{1}{\sigma}F_{4}})(\frac{1}{\sigma}y_{4}^{k} +
(2x^{k+1}-x^{k}))\nonumber \\
& = (y_{4}^{k} + \sigma(2x^{k+1}-x^{k})) - \sigma
P_{C}(\frac{1}{\sigma}y_{4}^{k} + (2x^{k+1}-x^{k})).
\end{align}

Therefore, the original problem (\ref{constrained-l2-l1-tv}) is
decomposed into an iterative sequence consisting of subproblems which
are much easier to solve, each one with a closed-form solution.

Next, we follow another approach to solve problem
(\ref{constrained-l2-l1-tv-equal}).

\noindent\textbf{Method II.} Let $G(x) = \iota_{C}(x)$, $F_{1}(v) =
\frac{1}{2}w_1\|v-b\|_{2}^{2}$, $K_1 = A$, $F_{2}(v)= w_2
\|v-b\|_{1}$, $K_2 = A$, $F_3(v)=\varphi(v)$, and $K_3 = D$. Then,
we can apply Algorithm \ref{splitting-primal-dual} to solve the
problem (\ref{constrained-l2-l1-tv-equal}),

\begin{algorithm}[H]
 \caption{A second class of splitting primal-dual proximity algorithm for solving problem (\ref{constrained-l2-l1-tv})}
\begin{algorithmic}\label{second-splitting}
\STATE \textbf{Initialization:} Give $\tau, \sigma >0$ such that
$\tau \sigma \leq \| 2A^{T}A + D^{T}D \|$, choose $(x^{0},y_{1}^{0},
y_{2}^{0}, y_{3}^{0}, y_{4}^{0})\in X\times Y_{1}\times Y_{2}\times
Y_{3} \times Y_{4}$;
 \STATE For $k=0, 1, 2, \cdots$ do
 \STATE 1.
$x^{k+1} = P_{C}(x^{k} - \tau
(A^{T}y_{1}^{k}+A^{T}y_{2}^{k}+D^{T}y_{3}^{k})$,
 \STATE 2. $y_{1}^{k+1} = prox_{\sigma F_{1}^{*}}(y_{1}^{k}+\sigma A
(2x^{k+1}-x^{k}))$,
 \STATE 3. $y_{2}^{k+1} = prox_{\sigma F_{2}^{*}}(y_{2}^{k}+\sigma A
(2x^{k+1}-x^{k}))$,
 \STATE 4. $y_{3}^{k+1} = prox_{\sigma F_{3}^{*}}(y_{3}^{k}+\sigma D
(2x^{k+1}-x^{k}))$.
 \STATE end
for when some stopping criterion is satisfied
\end{algorithmic}
\end{algorithm}

\begin{rmk}
(1)\ The difference between Algorithm \ref{first-splitting} and
Algorithm \ref{second-splitting} is that they treat the constraint
$C$ differently. In Algorithm \ref{first-splitting}, the indicator
function is set as the combination of a convex function with an identity
matrix, whereas in Algorithm \ref{second-splitting}, the indicator
function is defined as the function $G(x)$ in problem
(\ref{problem1}).

(2)\ Algorithm \ref{first-splitting} and Algorithm
\ref{second-splitting} use a fixed step size, which depends on the
estimation of some matrix norm. This norm is its largest singular
value, which can be computed via the power method in practice.
\end{rmk}

Based on the preconditioned splitting primal-dual proximity
algorithm (Algorithm \ref{preconditioned-splitting-primal-dual}), we
obtain the corresponding preconditioned Algorithm
\ref{first-splitting} and Algorithm \ref{second-splitting},
respectively.

\begin{algorithm}[H]
 \caption{A first class of preconditioned splitting primal-dual proximity algorithm for solving problem (\ref{constrained-l2-l1-tv})}
\begin{algorithmic}\label{first-pre-splitting}
\STATE \textbf{Initialization:} Follow the Lemma to define the
matrices $\mathrm{T}$ and $\mathrm{\Sigma_{i}}$, $i=1 ,2 , 3, 4$;
Choose $(x^{0},y_{1}^{0}, y_{2}^{0}, y_{3}^{0}, y_{4}^{0})\in
X\times Y_{1}\times Y_{2}\times Y_{3} \times Y_{4}$;
 \STATE For $k=0, 1, 2, \cdots$ do
 \STATE 1.
$x^{k+1} = x^{k} - \mathrm{T}
(A^{T}y_{1}^{k}+A^{T}y_{2}^{k}+D^{T}y_{3}^{k}+y_{4}^{k})$,
 \STATE 2. $y_{1}^{k+1} = prox_{\mathrm{\Sigma_{1}}
 F_{1}^{*}}(y_{1}^{k}+\mathrm{\Sigma_{1}} A
(2x^{k+1}-x^{k}))$,
 \STATE 3. $y_{2}^{k+1} = prox_{\mathrm{\Sigma_{2}}
 F_{2}^{*}}(y_{2}^{k}+\mathrm{\Sigma_{2}} A
(2x^{k+1}-x^{k}))$,
 \STATE 4. $y_{3}^{k+1} = prox_{\mathrm{\Sigma_{3}}
 F_{3}^{*}}(y_{3}^{k}+\mathrm{\Sigma_{3}} D
(2x^{k+1}-x^{k}))$,
 \STATE 5. $y_{4}^{k+1} = prox_{\mathrm{\Sigma_{4}}
 F_{4}^{*}}(y_{4}^{k}+\mathrm{\Sigma_{4}}
(2x^{k+1}-x^{k}))$.
 \STATE end
for when some stopping criterion is satisfied
\end{algorithmic}
\end{algorithm}

Similarly, we can provide preconditioned Algorithm
\ref{second-splitting} as follows.

\begin{algorithm}[H]
 \caption{A second class of preconditioned splitting primal-dual proximity algorithm for solving problem (\ref{constrained-l2-l1-tv})}
\begin{algorithmic}\label{second-pre-splitting}
\STATE \textbf{Initialization:} Follow the Lemma to define the
matrices $\mathrm{T}$ and $\mathrm{\Sigma_{i}}$, $i=1 ,2 , 3$;
Choose $(x^{0},y_{1}^{0}, y_{2}^{0}, y_{3}^{0})\in X\times
Y_{1}\times Y_{2}\times Y_{3}$;
 \STATE For $k=0, 1, 2, \cdots$ do
 \STATE 1.
$x^{k+1} = P_{C}(x^{k} - \mathrm{T}
(A^{T}y_{1}^{k}+A^{T}y_{2}^{k}+D^{T}y_{3}^{k})$,
 \STATE 2. $y_{1}^{k+1} = prox_{\mathrm{\Sigma_{1}}
 F_{1}^{*}}(y_{1}^{k}+\mathrm{\Sigma_{2}} A
(2x^{k+1}-x^{k}))$,
 \STATE 3. $y_{2}^{k+1} = prox_{\mathrm{\Sigma_{2}}
 F_{2}^{*}}(y_{2}^{k}+\mathrm{\Sigma_{2}} A
(2x^{k+1}-x^{k}))$,
 \STATE 4. $y_{3}^{k+1} = prox_{\mathrm{\Sigma_{3}}
 F_{3}^{*}}(y_{3}^{k}+\mathrm{\Sigma_{3}} D
(2x^{k+1}-x^{k}))$.
 \STATE end
for when some stopping criterion is satisfied
\end{algorithmic}
\end{algorithm}

\begin{rmk}

(1)\ For the unconstrained optimization problem
(\ref{constrained-l2-l1-tv}), i.e., $C :=R^{n}$, Algorithm
\ref{first-splitting} and Algorithm \ref{second-splitting} are
equivalent, as are Algorithm \ref{first-pre-splitting} and
Algorithm \ref{second-pre-splitting}.

(2)\ In comparison with Algorithm \ref{first-splitting} and
Algorithm \ref{second-splitting}, Algorithm
\ref{first-pre-splitting} and Algorithm \ref{second-pre-splitting}
can be used to obtain the iterative parameters self-adaptively without the need
to know the respective matrix norm.

\end{rmk}

\section{Numerical experiments}\label{section6}

In Section \ref{section5}, we derived an instance of the proposed
splitting primal-dual proximity algorithms.
 To demonstrate the performance of these proposed algorithms, we apply
 them to the test problems described in Section \ref{section5}.
 All experiments were performed using MATLAB on a
 Lenovo Thinkstation running Windows 7 with an Intel Core 2 CPU and 4 GB of RAM.

Two-dimensional tomography test problems were created by using
AIRTools\cite{hansen1}, which is a MATLAB software package for tomographic
reconstruction that was developed by Prof. Perchristian Hansen and
his collaborators. The package includes two core functions "fanbeamtomo" and
"paralleltomo", which were used to generate the simulation data. For
example, the function "paralleltomo" creates a 2D tomography test
problem using parallel beams.
\begin{equation}\label{paralletomo}
[A,b,x] = paralleltomo(N,theta,p),
\end{equation}
where the input variables are as follows: $N$ is a scalar denoting
the number of discretization intervals in each dimension such that
the domain consists of $N^{2}$ cells, $theta$ is a vector containing
the angles in degrees (default: $theta = 0:1:179$), and $p$ is the number
of parallel rays for each angle (default: $p=round(\sqrt{2}N)$). The output
variables are the following: $A$ is a coefficient matrix with $N^{2}$ columns and
$length(theta)*p$ rows, $b$ is a vector containing the projection
data, and $x$ is a vector containing the exact solution with elements
between $0$ and $1$. We refer the reader to the AIRTools manual for
further details. The test image is the standard benchmark
Shepp-Logan phantom (see Figure \ref{fig1}.) with size $256\times
256$ and pixels are assigned values varying from $0$ to $1$.

\begin{figure} 
\centering \setlength{\floatsep}{0pt}
\setlength{\abovecaptionskip}{-20pt}
\scalebox{0.4}{\includegraphics[width=1\textwidth]{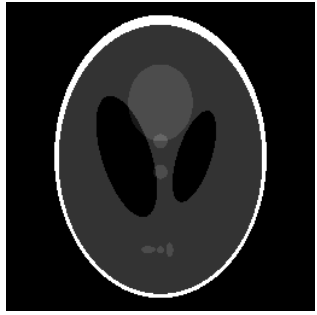}}
\caption[]{Original Shepp-Logan phantom}\label{fig1}
\end{figure}


We measured the quality of recovered images by using the criterion
signal-to-noise ratio (SNR),
$$
SNR = log_{10}\left (  \frac{\|x_{true}\|_{2}^{2}}{ \|x_{true} -
x_{rec}\|_{2}^2 } \right),
$$
where $x_{true}$ is the original image, $x_{rec}$ denotes the
reconstructed image obtained by using the iterative algorithms. The
iterative process is stopped when the relative error
$$
\frac{\|x^{k+1}-x^{k}\|_{2}}{\|x^{k}\|_{2}} \leq \epsilon,
$$ where $\epsilon$ is a
given small real number.

We compare the performance of Algorithm \ref{first-splitting} and
Algorithm \ref{second-splitting}, as well as that of Algorithm
\ref{first-pre-splitting} and Algorithm \ref{second-pre-splitting}.
The anisotropic TV (ATV) and isotropic TV (ITV) perform similarly;
therefore, we use the (ATV) regularization term in the
following test. The initial values of the variables are set to zero in
all iterative algorithms. In Algorithm \ref{first-splitting} and
Algorithm \ref{second-splitting}, the induced norm of the operator
$\| 2A^{T}A + D^{T}D + I \|$ and $\| 2A^{T}A + D^{T}D \|$ are
estimated using the standard power iteration algorithm. For the
preconditioned algorithm, the parameter $\alpha$ was set to one in
Lemma \ref{ourlemms}.

The projection angles in (\ref{paralletomo}) are set as $theta =
0:10:179$. A total of 18 angles were used in the simulation
test. The $p$ value is set as default. Then, the system matrix $A$ is
$6516\times 65536$, which is an under-determined matrix. Both
Gaussian and impulsive noise are added to the projection data vector
$b$. The performance of Algorithm \ref{first-splitting}, Algorithm
\ref{second-splitting}, Algorithm \ref{first-pre-splitting},
  and Algorithm \ref{second-pre-splitting} are listed in Table \ref{first-nonnegative}
and Table \ref{first-bounded}, respectively. The '-' entries
indicate that the algorithm failed to reduce the error below the given
tolerance $\epsilon$ within a maximum number of $40000$ iterations.


\begin{table}[htbp]
\footnotesize
  \centering
  \caption{Comparison of the performance of Algorithms \ref{first-splitting}, \ref{second-splitting}, \ref{first-pre-splitting},
  and \ref{second-pre-splitting} in terms of SNR and iteration numbers with non-negativity constraints, i.e., $C= \{ x | x\geq
  0\}$}.
    \begin{tabular}{cccccc}
   \hline
\multirow{2}[1]{*}{Regularization} & \multirow{2}[1]{*}{Methods} &    $\epsilon = 10^{-3}$    &  $\epsilon = 10^{-4}$      &   $\epsilon = 10^{-5}$      &  $\epsilon = 10^{-6}$    \\
     Parameter      & & $SNR (dB)/k$  &  $SNR(dB)/k$ & $SNR(dB)/k$ & $SNR(dB)/k$  \\
    \hline
    \multirow{4}[1]{*}{$\lambda=0.6$} & Algorithm \ref{first-splitting} & $18.66/3253$ & $24.31/15804$ & $25.25/36598$ & $25.34/-$   \\
  & Algorithm \ref{first-pre-splitting} & $24.40/430$ & $25.75/1236$ & $26.17/5852$ & $26.18/21265$   \\
  & Algorithm \ref{second-splitting} & $19.08/1882$ & $25.65/14659$ & $26.11/30009$ & $26.15/-$  \\
    & Algorithm \ref{second-pre-splitting} & $24.52/378$ & $25.86/1154$ & $26.17/4634$ & $26.18/16433$   \\
    \hline
\multirow{4}[1]{*}{$\lambda=0.8$} & Algorithm \ref{first-splitting} & $18.73/3399$ & $25.49/17359$ & $26.73/38831$ & $26.78/-$   \\
  & Algorithm \ref{first-pre-splitting} & $25.33/465$ & $27.11/1346$ & $27.67/5805$ & $27.73/22503$   \\
  & Algorithm \ref{second-splitting} & $19.01/2158$ & $26.84/17613$ & $27.53/36010$ & $27.55/-$  \\
    & Algorithm \ref{second-pre-splitting} & $25.43/404$ & $27.27/1286$ & $27.67/4559$ & $27.74/16967$   \\
    \hline
\multirow{4}[1]{*}{$\lambda=1.2$} & Algorithm \ref{first-splitting} & $18.67/3754$ & $27.41/19697$ & $28.87/-$ & $28.87/-$   \\
  & Algorithm \ref{first-pre-splitting} & $26.57/495$ & $29.10/1473$ & $29.86/4462$ & $30.09/21411$   \\
  & Algorithm \ref{second-splitting} & $18.31/2593$ & $28.29/20614$ & $29.47/-$ & $29.47/-$  \\
    & Algorithm \ref{second-pre-splitting} & $26.65/445$ & $29.13/1412$ & $29.87/3930$ & $30.11/19898$   \\
    \hline
\multirow{4}[1]{*}{$\lambda=1.6$} & Algorithm \ref{first-splitting} & $18.37/3921$ & $28.31/21132$ & $29.78/-$ & $29.78/-$   \\
  & Algorithm \ref{first-pre-splitting} & $26.89/502$ & $30.00/1521$ & $30.99/4223$ & $31.43/23334$   \\
  & Algorithm \ref{second-splitting} & $17.78/2791$ & $28.79/21704$ & $30.32/-$ & $30.32/-$  \\
    & Algorithm \ref{second-pre-splitting} & $27.00/471$ & $30.05/1476$ & $30.98/3751$ & $31.44/20318$   \\
    \hline
    \multirow{4}[1]{*}{$\lambda=1.8$} & Algorithm \ref{first-splitting} & $18.26/4006$ & $28.10/21545$ & $29.80/-$ & $29.80/-$   \\
  & Algorithm \ref{first-pre-splitting} & $26.74/504$ & $29.92/1518$ & $31.06/4278$ & $31.59/20884$   \\
  & Algorithm \ref{second-splitting} & $17.73/2920$ & $28.52/21850$ & $30.25/-$ & $30.25/-$  \\
    & Algorithm \ref{second-pre-splitting} & $26.80/478$ & $29.98/1490$ & $31.02/3889$ & $31.64/21790$   \\
    \hline
    \multirow{4}[1]{*}{$\lambda=2$} & Algorithm \ref{first-splitting} & $17.92/3942$ & $27.73/21777$ & $29.40/-$ & $29.40/-$   \\
  & Algorithm \ref{first-pre-splitting} & $26.30/506$ & $29.53/1529$ & $30.76/4410$ & $31.33/20260$   \\
  & Algorithm \ref{second-splitting} & $17.62/3073$ & $27.94/22010$ & $29.79/-$ & $29.79/-$  \\
    & Algorithm \ref{second-pre-splitting} & $26.32/487$ & $29.72/1552$ & $30.73/4134$ & $31.35/19732$   \\
    \hline
    \end{tabular}%
  \label{first-nonnegative}%
\end{table}%

\begin{table}[htbp]
\footnotesize
  \centering
  \caption{Comparison of the performance of Algorithms \ref{first-splitting}, \ref{second-splitting}, \ref{first-pre-splitting},
  and \ref{second-pre-splitting} in terms of SNR and iteration numbers with box constraints, i.e., $C= \{ x | 0 \leq
  x\leq
  1\}$}.
    \begin{tabular}{cccccc}
   \hline
\multirow{2}[1]{*}{Regularization} & \multirow{2}[1]{*}{Methods} &    $\epsilon = 10^{-3}$    &  $\epsilon = 10^{-4}$      &   $\epsilon = 10^{-5}$      &  $\epsilon = 10^{-6}$    \\
     Parameter      & & $SNR (dB)/k$  &  $SNR(dB)/k$ & $SNR(dB)/k$ & $SNR(dB)/k$  \\
    \hline
    \multirow{4}[1]{*}{$\lambda=0.6$} & Algorithm \ref{first-splitting} & $19.20/2899$ & $24.86/14335$ & $26.05/35501$ & $26.15/-$   \\
  & Algorithm \ref{first-pre-splitting} & $24.65/380$ & $26.65/1208$ & $27.23/5966$ & $27.31/22942$   \\
  & Algorithm \ref{second-splitting} & $19.08/655$ & $26.55/12137$ & $27.18/28035$ & $27.24/-$  \\
    & Algorithm \ref{second-pre-splitting} & $25.03/343$ & $26.78/1123$ & $27.25/4450$ & $27.30/16060$   \\
    \hline
\multirow{4}[1]{*}{$\lambda=0.8$} & Algorithm \ref{first-splitting} & $19.21/2969$ & $25.94/15457$ & $27.40/37330$ & $27.48/-$   \\
  & Algorithm \ref{first-pre-splitting} & $25.25/384$ & $27.83/1273$ & $28.65/5385$ & $28.77/22840$   \\
  & Algorithm \ref{second-splitting} & $19.28/799$ & $27.58/14095$ & $28.47/32327$ & $28.55/-$  \\
    & Algorithm \ref{second-pre-splitting} & $25.53/353$ & $28.08/1244$ & $28.71/14688$ & $28.79/18123$   \\
    \hline
\multirow{4}[1]{*}{$\lambda=1.2$} & Algorithm \ref{first-splitting} & $19.08/3187$ & $27.46/17257$ & $29.09/-$ & $29.09/-$   \\
  & Algorithm \ref{first-pre-splitting} & $25.90/396$ & $29.23/1342$ & $30.22/14588$ & $30.48/22166$   \\
  & Algorithm \ref{second-splitting} & $19.00/1132$ & $28.61/16171$ & $29.80/37411$ & $29.84/-$  \\
    & Algorithm \ref{second-pre-splitting} & $26.13/354$ & $29.30/1309$ & $30.22/3899$ & $30.50/19944$   \\
    \hline
\multirow{4}[1]{*}{$\lambda=1.6$} & Algorithm \ref{first-splitting} & $18.79/3346$ & $28.00/18658$ & $29.78/-$ & $29.78/-$   \\
  & Algorithm \ref{first-pre-splitting} & $26.11/413$ & $29.94/1422$ & $30.98/4097$ & $31.44/23096$   \\
  & Algorithm \ref{second-splitting} & $18.52/1478$ & $28.96/17626$ & $30.38/-$ & $30.38/-$  \\
    & Algorithm \ref{second-pre-splitting} & $26.41/378$ & $29.97/1378$ & $31.00/3790$ & $31.45/20929$   \\
    \hline
    \multirow{4}[1]{*}{$\lambda=1.8$} & Algorithm \ref{first-splitting} & $18.42/3366$ & $27.91/19126$ & $29.78/-$ & $29.78/-$   \\
  & Algorithm \ref{first-pre-splitting} & $25.89/415$ & $29.91/1462$ & $31.08/4373$ & $31.59/21341$   \\
  & Algorithm \ref{second-splitting} & $18.38/1645$ & $28.78/18234$ & $30.29/-$ & $30.29/-$  \\
    & Algorithm \ref{second-pre-splitting} & $26.20/392$ & $29.92/1419$ & $31.03/3922$ & $31.63/21140$   \\
    \hline
    \multirow{4}[1]{*}{$\lambda=2$} & Algorithm \ref{first-splitting} & $18.04/3432$ & $27.65/19592$ & $29.44/-$ & $29.44/-$   \\
  & Algorithm \ref{first-pre-splitting} & $25.53/422$ & $29.52/1488$ & $30.76/4380$ & $31.33/19802$   \\
  & Algorithm \ref{second-splitting} & $18.31/1858$ & $28.13/18920$ & $29.81/-$ & $29.81/-$  \\
    & Algorithm \ref{second-pre-splitting} & $25.89/408$ & $29.56/1464$ & $30.73/4070$ & $31.35/20100$   \\
    \hline
    \end{tabular}%
  \label{first-bounded}%
\end{table}%

The results in Tables \ref{first-nonnegative} and \ref{first-bounded}
indicate that preconditioned iterative Algorithm
\ref{first-pre-splitting} and Algorithm \ref{second-pre-splitting}
converge faster than iterative Algorithm \ref{first-splitting}
and Algorithm \ref{second-splitting}, respectively. The second class
of splitting primal-dual proximity algorithms (Algorithm
\ref{second-splitting} and Algorithm \ref{second-pre-splitting})
achieve higher SNR values than the first class (Algorithm
\ref{first-splitting} and Algorithm \ref{first-pre-splitting}),
respectively.

In addition, the results in Tables \ref{first-nonnegative} and
\ref{first-bounded} show that when the error tolerance decreases,
the SNR value increases accordingly; however, this requires a
larger number of iterations and is more time consuming.
The regularization parameter also has an impact on the
performance of these iterative algorithms. A large regularization
parameter means that the total variation term is strongly penalized.
We found the SNR value to increase as we increased the
regularization parameter; however, the SNR value was observed to
decrease when the regularization parameter exceeded the value of 2.

A comparison between Tables \ref{first-nonnegative} and
\ref{first-bounded} revealed that the SNR values of the reconstructed
images are very similar for the given regularization parameter level. The
reconstructed images are shown in Figure
\ref{experiment1-nonnegative-fig2} and Figure
\ref{experiment1-bounded-fig3}, where the regularization parameter
$\lambda = 1.8$ and the tolerance $\epsilon = 10^{-6}$.

\begin{figure} 
\centering \setlength{\floatsep}{0pt}
\setlength{\abovecaptionskip}{-20pt}
\scalebox{1}{\includegraphics[width=1\textwidth]{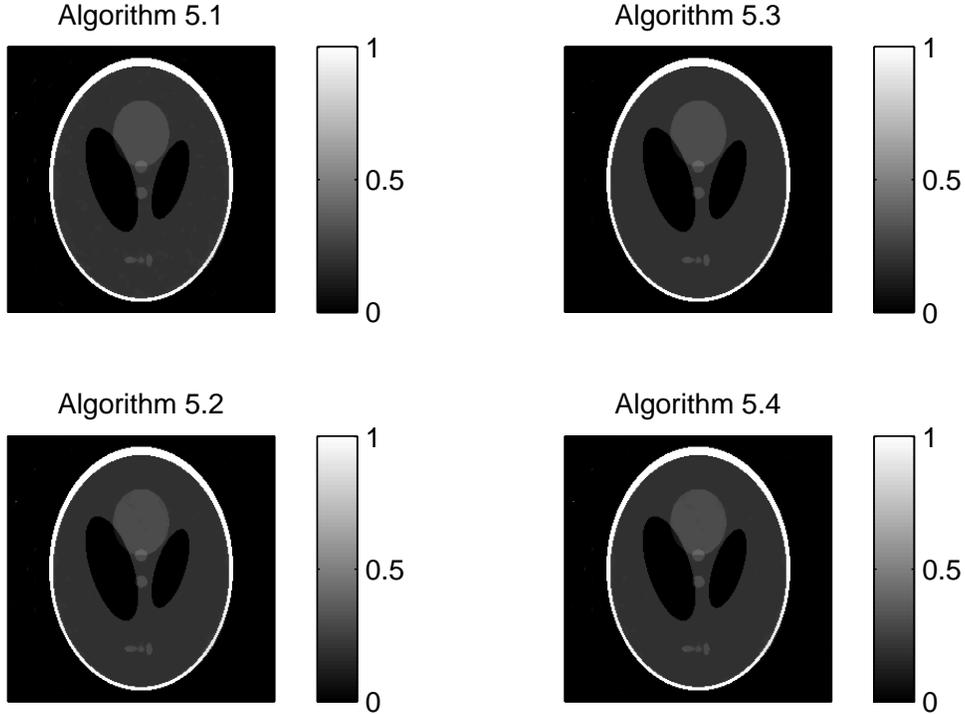}}
\caption[]{Reconstructed images obtained from Algorithms
\ref{first-splitting}, \ref{second-splitting}, \ref{first-pre-splitting},
and \ref{second-pre-splitting}
with non-negativity constraints, respectively.
}\label{experiment1-nonnegative-fig2}
\end{figure}

\begin{figure} 
\centering \setlength{\floatsep}{0pt}
\setlength{\abovecaptionskip}{-20pt}
\scalebox{1}{\includegraphics[width=1\textwidth]{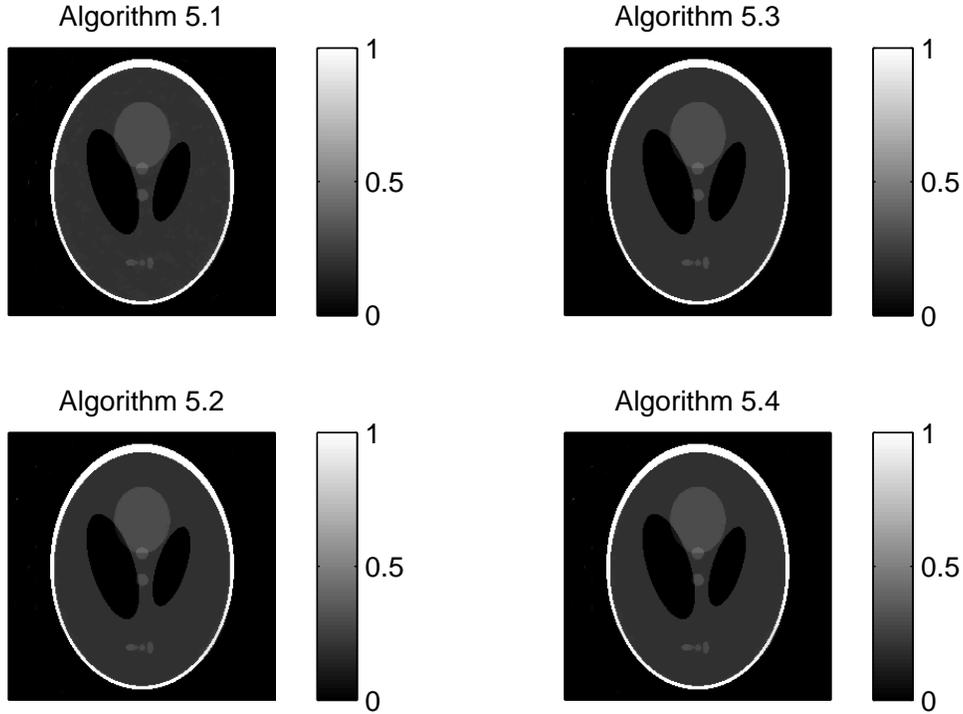}}
\caption[]{Reconstructed images obtained from Algorithms
\ref{first-splitting}, \ref{second-splitting},
\ref{first-pre-splitting}, and \ref{second-pre-splitting}
with box constraints, respectively.}\label{experiment1-bounded-fig3}
\end{figure}

\section{Conclusions}

In this paper, we proposed a splitting primal-dual proximity
algorithm to solve the general optimization problem
(\ref{problem1}). As its iterative parameters rely on
estimating some operator norm, this may affect its practical use.
Thus, we introduced a precondition technique to compute the iterative
parameters self-adaptively. Under some mild assumptions,
we proved the theoretical convergence of both iterative algorithms.
The methods proposed in this paper have been applied to the
constrained optimization model (\ref{constrained-l2-l1-tv}), which
has wide application in image restoration and image reconstruction
problems. We verified the numerical performance of these iterative
algorithms by applying them to CT image reconstruction problems. The
numerical results were very promising.

Although we have illustrated the use of our proposed methods in the
context of a CT image reconstruction problem, the proposed methods can
also be used to solve other application problems such as image deblurring
and denoising, and statistical learning problems.

\bigskip
\bigskip

\noindent \textbf{Acknowledgements}

This work was additionally supported by the National Natural Science
Foundations of China (11131006, 11201216, 11401293, 11461046), the
National Basic Research Program of China (2013CB329404), and the Natural
Science Foundations of Jiangxi Province (20151BAB211010,
20142BAB211016).


\end{document}